\numberwithin{equation}{section}
\newtheorem{thm}{Theorem}[section]
\newtheorem{lemma}[thm]{Lemma}
\newtheorem{cor}[thm]{Corollary}
\newtheorem{conj}[thm]{Conjecture}
\newtheorem{rem}[thm]{Remark}
\definecolor{darkblue}{rgb}{0.0,0,0.7}
\newcommand{\darkblue}{\color{darkblue}}
\definecolor{darkred}{rgb}{0.68,0,0}
\newcommand{\darkred}{\color{darkred}}
\definecolor{darkgreen}{rgb}{0,.38,0}
\newcommand{\darkgreen}{\color{darkgreen}}
\newcommand{\defn}[1]{\emph{\darkblue #1}}
\newcommand{\defna}[1]{\emph{\darkred #1}}
\newcommand{\defng}[1]{\emph{\darkgreen #1}}
\title[Bounds on Kronecker coefficients]{Durfee squares, symmetric partitions \\ and bounds on Kronecker coefficients}
\author[Igor Pak \, and \, Greta Panova]{ Igor Pak$^\star$ and Greta Panova$^\dagger$}
\thanks{\today}
\thanks{\thinspace ${\hspace{-.45ex}}^\star$Department of Mathematics,
UCLA, Los Angeles, CA~90095.
\hskip.06cm
Email:
\hskip.06cm
\texttt{pak@math.ucla.edu}}
\thanks{\thinspace ${\hspace{-.45ex}}^\dagger$Department of Mathematics,
 USC, Los Angeles, CA~90089.
\hskip.06cm
Email:
\hskip.06cm
\texttt{gpanova@usc.edu}}
\def\nn{\mathbb N}
\def\rr{\mathbb R}
\def\Om{\Omega}
\def\la{\lambda}
\def\ga{\gamma}
\def\si{\sigma}
\def\de{\delta}
\def\al{\alpha}
\def\be{\beta}
\def\ve{\varepsilon}
\def\cP{\mathcal P}
\def\ssu{\subset}
\def\<{\langle}
\def\>{\rangle}
\def\rB{{\text {\rm B} } }
\def\rA{\textup{{\rm \textrm{A}}}}
\def\rAs{\textup{{\rm \textrm{A}{\hskip-0.03cm}$^{\textit{s}}$}}}
\def\rB{\textup{{\rm \textrm{B}}}}
\def\rBfs{\textup{{\rm \textrm{B}{\hskip-0.03cm}$^{\textit{fs}}$}}}
\def\rF{\textup{{\rm \textrm{F}}}}
\def\rK{\textup{{\rm \textrm{K}}}}
\def\rKs{\textup{{\textrm{K}$^{\textit{s}}$}}}
\def\rKfs{\textup{{\textrm{K}$^{\textit{fs}}$}}}
\def\0{{\mathbf 0}}
\def\.{\hskip.06cm}
\def\ts{\hskip.03cm}
\def\nin{\noindent}
\def\LHS{{{left-hand side}}}
\def\RHS{{{right-hand side}{} }}
\begin{document}

\begin{abstract}
We resolve two open problems on Kronecker coefficients
\ts $g(\la,\mu,\nu)$ \ts of the symmetric group.
First, we prove that for partitions \ts $\la,\mu,\nu$ \ts
with fixed Durfee square size, the Kronecker coefficients
grow at most polynomially.  Second, we show that the maximal
Kronecker coefficients \ts $g(\la,\la,\la)$ \ts for
self-conjugate partitions~$\la$ grow superexponentially.
We also give applications to explicit special cases.
\end{abstract}

\maketitle
%\tableofcontents

\section{Introduction}

%\medskip
\subsection{Foreword}\label{ss:intro-Fore}
How do you approach a massive open problem with countless cases to consider?
%where state of the art is meager and the tools are lacking?
You start from the beginning, of course, trying to resolve either
the most natural, the most interesting or the simplest yet  out of reach
special cases.  For example, when looking at the billions and billions of
stars
contemplating the immense challenge
of celestial cartography, you start
with the \emph{closest} (Alpha Centauri and Barnard's Star), the
\emph{brightest} (Sirius and Canopus), or the \emph{most useful}
(Polaris aka North Star),
but not with the
galaxy far, far away.

The same principle applies to the \defna{Kronecker coefficients}
\. $g(\la,\mu,\nu)$.
Introduced by Murnaghan in 1938, they remain among the great mysteries
of Algebraic Combinatorics.  In part due to the fact that they lack a
combinatorial interpretation, even the most basic questions present seemingly
insurmountable challenges, while even the simplest examples are already
hard to compute.  Yet, this should not prevent us from pursuing both.

\smallskip

In our previous paper~\cite{PP_ct}, we briefly surveyed the dispiriting state of art
on Kronecker bounds, and identified two promising problems which are
both interesting, simple looking, yet not immediately approachable with the tools previously used:
\smallskip

$(1)$ \ give upper bounds for \. $g(\la,\mu,\nu)$, where
 $\ts\la,\mu,\nu$ \ts have a \emph{small Durfee square},

\smallskip

$(2)$ \ give lower bounds for the maximal \. $g(\la,\la,\la)$, where \ts $\la$ \ts
is \emph{symmetric}\ts : \. $\la=\la'$.

\smallskip

\nin
We largely resolve both problems, getting estimates
up to a constant in the leading terms of the asymptotics.
For the \defna{small Durfee square problem}~$(1)$, we
employ  symmetric functions technology and obtain new estimates
on the \emph{Littlewood--Richardson coefficients} which
are of independent interest.  For the \defna{fully
symmetric Kronecker problem}~$(2)$,
we use a combinatorial argument based on the \emph{monotonicity property}.
We then use this argument to derive the first nontrivial lower bounds
in several explicit examples.

\medskip

\subsection{Small Durfee square problem}  \label{ss:intro-Durfee}
For a partition \ts $\la \vdash n$, denote by \ts $\ell(\la)$ \ts
the \defn{length} of $\la$, i.e.\ the number of rows in the Young diagram~$\la$.
Denote by \ts $d(\la)$ \ts the \defn{Durfee square size}, i.e.\
the size of the largest square which fits~$\la$. Clearly, \ts $d(\la) \le \ell(\la)$.

The \defn{Kronecker coefficients} \ts $g(\la,\mu,\nu) \in \nn$ \ts
are defined as the structure constants in the ring of characters of \ts $S_n\ts$:
$${\chi^{\mu}} \. \cdot \. \chi^\nu \. = \. \sum_{\la \vdash n} \. g(\la,\mu,\nu) \. \chi^\la\,, \quad
\text{where \ \  $\mu, \. \nu\ts \vdash \ts n$}
$$
and $\chi^\al$ denotes the character of the irreducible representation (Specht module) indexed by the partition $\al$.
Note that \. $g(\la,\mu,\nu)$ \. are symmetric with respect to permutations of three partitions.

It is known that \. $g(\la,\mu,\nu) \le \min\bigl\{f^\la,f^\mu,f^\nu\bigr\}$,
where \ts $f^\al:=\chi^\al(1)$ is the dimension of the Specht module, see \cite{PPY}, but there are no other general
bounds.
On the other hand, for partitions with few rows, we have the following general upper bound:

\smallskip

\begin{thm}[\cite{PP_ct}] \label{t:PP_ct_main}
Let \ts $\la,\mu,\nu\vdash n$, such that \ts $\ell(\la)=\ell$, \ts $\ell(\mu) = m$,
and \ts $\ell(\nu)=r$.  Then:
$$g(\la,\mu,\nu) \ \le \ \Bigg(1\. + \. \frac{\ell m r}{n}\Bigg)^n \.
\Bigg(1 \. + \, \frac{n}{\ell m r}\Bigg)^{\ell m r} .
$$
\end{thm}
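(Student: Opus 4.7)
My plan is to exploit the fact that the RHS is exactly the entropy estimate for the binomial coefficient $\binom{n+\ell m r}{\ell m r}$.  The strategy is to establish
$$g(\la,\mu,\nu) \ \le \ \binom{n + \ell m r - 1}{\ell m r - 1},$$
i.e.\ to bound $g(\la,\mu,\nu)$ by the number of three-dimensional nonnegative integer arrays of size $\ell \times m \times r$ with entries summing to $n$, and then to apply the standard inequality $\binom{a+b}{a} \le (a+b)^{a+b}/(a^a b^b)$ with $a = n$, $b = \ell m r$ (which follows at once from expanding $(a+b)^{a+b}$ by the binomial theorem and keeping a single term).

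For the first, representation-theoretic step I would pass from irreducible to Young permutation characters.  Let $\eta^\alpha = \operatorname{Ind}_{S_\alpha}^{S_n} \mathbf{1}$ denote the character of $S_n$ acting on tabloids of shape~$\alpha$.  Young's rule gives $\eta^\alpha = \chi^\alpha + \psi^\alpha$ with $\psi^\alpha = \sum_{\kappa \rhd \alpha} K_{\kappa,\alpha}\ts \chi^\kappa$ an honest character.  Applying this decomposition to each of $\la,\mu,\nu$ and expanding
$$\langle \eta^\la, \. \eta^\mu \eta^\nu \rangle \ = \ \langle \chi^\la + \psi^\la, \. (\chi^\mu + \psi^\mu)(\chi^\nu + \psi^\nu) \rangle$$
produces eight terms, each an inner product of a character with a product of two characters, and hence a nonnegative integer (a nonnegative combination of Kronecker coefficients).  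Singling out the term $\langle \chi^\la, \chi^\mu \chi^\nu \rangle = g(\la,\mu,\nu)$ yields
$$g(\la,\mu,\nu) \ \le \ \langle \eta^\la,\. \eta^\mu \eta^\nu \rangle.$$

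Next I will read the right-hand side combinatorially.  Since $\eta^\la \eta^\mu \eta^\nu$ is the character of $S_n$ acting diagonally on ordered triples of tabloids of shapes $\la,\mu,\nu$, its multiplicity of the trivial representation counts $S_n$-orbits on such triples.  An orbit is uniquely encoded by the $\ell \times m \times r$ three-dimensional array whose $(i,j,k)$ entry records the number of elements placed simultaneously in row $i$ of the first tabloid, row $j$ of the second, and row $k$ of the third; the row, column, and tube marginals recover $\la,\mu,\nu$.  Discarding the marginal constraints and keeping only the total-sum constraint $n$ produces the asserted binomial upper bound.

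The conceptually delicate step is the passage from Specht to permutation modules: one must carefully track that every cross-term in the expansion is genuinely nonnegative, which rests on the fact that products and inner products of characters of $S_n$ are nonnegative integer combinations of Kronecker coefficients.  Once this is granted, the orbit-to-contingency-table bijection is routine, and the final entropy estimate is a direct calculation producing exactly $\left(1 + \frac{\ell m r}{n}\right)^n \left(1 + \frac{n}{\ell m r}\right)^{\ell m r}$ on the nose.
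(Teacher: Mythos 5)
Your proposal is correct and reconstructs exactly the argument from \cite{PP_ct}, where this theorem is proved (the current paper only cites it): bound \ts $g(\la,\mu,\nu)$ \ts by the inner product of Young permutation characters, read that as the number of 3-dimensional contingency arrays with marginals \ts $\la,\mu,\nu$, relax the marginal constraints to a single total-sum constraint, and finish with the entropy bound \ts $\binom{a+b}{b}\le (a+b)^{a+b}/(a^a b^b)$. The steps you flag as delicate (nonnegativity of the cross-terms, the orbit-to-array bijection) go through exactly as you describe, so the argument is complete.
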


In particular, we have:

\smallskip

\begin{cor}[{\rm see~$\S$\ref{ss:Basic-dim}}]\label{c:rows}
Let \ts $\la,\mu,\nu\vdash n$, such that \. $\ell(\la), \ell(\mu), \ell(\nu) \le k$.  Then:
\begin{equation}\label{eq:cor-rows}
g(\la,\mu,\nu) \, \leq \, n^{k^3}\ts.
\end{equation}
\end{cor}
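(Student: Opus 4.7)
The plan is to apply Theorem~\ref{t:PP_ct_main} with $\ell=m=r=k$, which under the hypothesis $\ell(\la),\ell(\mu),\ell(\nu)\le k$ yields
$$
g(\la,\mu,\nu) \ \le \ \left(1+\frac{k^3}{n}\right)^{n}\left(1+\frac{n}{k^3}\right)^{k^3},
$$
so it remains to bound this expression by $n^{k^3}$. A short case split on the size of $n$ relative to $k^3$ is all that is needed.

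In the main regime $n \ge k^3$ (with $k \ge 2$), first apply the standard inequality $(1+k^3/n)^n \le e^{k^3}$ to the first factor, and bound $1+n/k^3 \le 2n/k^3$ in the second (valid since $n/k^3 \ge 1$), to obtain
$$
g(\la,\mu,\nu) \ \le \ e^{k^3}\,(2n/k^3)^{k^3} \ = \ \left(\frac{2en}{k^3}\right)^{k^3}.
$$
Since $k \ge 2$ forces $k^3 \ge 8 > 2e$, the coefficient $2e/k^3$ is at most $1$, yielding $g(\la,\mu,\nu) \le n^{k^3}$.

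In the complementary regime $n<k^3$, the second factor is no longer well approximated by a pure power of $n$, and the above estimate degrades. Instead I would fall back on the elementary bound $g(\la,\mu,\nu)\le\min\{f^\la,f^\mu,f^\nu\}$ already noted in the introduction, combined with $\sum_{\la\vdash n}(f^\la)^2 = n!$, to get $g(\la,\mu,\nu) \le \sqrt{n!} \le n^{n/2} \le n^{k^3}$, the last step because $n/2 < k^3$. The trivial edge case $k=1$ is handled separately: then $\la=(n)$ is the only admissible partition, the associated character is trivial, and $g((n),(n),(n))=1 \le n$.

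The argument is essentially a direct computation from Theorem~\ref{t:PP_ct_main}, and I do not expect any real obstacle. The only mild subtlety is that the bound from Theorem~\ref{t:PP_ct_main} does not by itself cleanly produce the polynomial form $n^{k^3}$ for very small $n$, which is precisely why the second regime must be handled via the elementary dimension bound rather than by massaging the Theorem~\ref{t:PP_ct_main} estimate directly.
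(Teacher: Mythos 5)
Your proof is correct and takes essentially the same route as the paper: a three-way split into $n\ge k^3$ with $k\ge 2$ (handled via Theorem~\ref{t:PP_ct_main} together with the elementary inequality $(1+a/b)^b(1+b/a)^a\le b^a$ for $2e\le a\le b$, which your two-step estimate $(1+k^3/n)^n\le e^{k^3}$ and $(1+n/k^3)^{k^3}\le(2n/k^3)^{k^3}$ reproduces), then $n<k^3$ via the dimension bound, then $k=1$ separately. The only cosmetic differences are that you use the slightly sharper bound $g\le\sqrt{n!}$ where the paper uses $g\le n!$, and you dispatch the $k=1$ case by direct inspection rather than citing Rosas.
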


\smallskip

In other words, for partitions with fixed number of rows, the Kronecker coefficients
are bounded polynomially.\footnote{Let us mention \cite{MT} which appeared after the 
first version of this paper, and improves some of our constants from~\cite{PP2,PP_ct} in some special cases 
of partitions with bounded number of rows.}  Recently, we conjectured that the same holds
for partitions with fixed Durfee square size.

\smallskip

\begin{conj}[{\rm \cite[Rem.~5.10]{PP_ct}}]\label{conj:Durfee}
Fix \ts $k\ge 1$ \. and let \. $\la,\mu,\nu\vdash n$, such that \. $d(\la), \ts d(\mu), \ts d(\nu) \. \le \ts k$.
Then \.
$g(\la,\mu,\nu) \ts \leq \ts n^{c}$ \. for some constant \. $c=c(k)>0$.
\end{conj}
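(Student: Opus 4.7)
The plan is to use Giambelli's determinantal identity to express each $\chi^\la$ as a short signed sum of characters induced from Young subgroups whose tensor factors are hook characters, and then to combine this with Frobenius reciprocity together with new polynomial bounds on Littlewood--Richardson multicoefficients to reduce the problem to Kronecker products involving hooks. Concretely, writing $\la$ in Frobenius coordinates as $\la = (a_1, \ldots, a_d \,|\, b_1, \ldots, b_d)$ with $d = d(\la) \le k$, Giambelli gives
$$s_\la \ = \ \det\bigl(s_{(a_i|b_j)}\bigr)_{i,j=1}^d \ = \ \sum_{\sigma \in S_d} \operatorname{sgn}(\sigma) \prod_{i=1}^d s_{(a_i|b_{\sigma(i)})},$$
which via the Frobenius characteristic presents $\chi^\la$ as an alternating sum of at most $d! \le k!$ induced characters $\operatorname{Ind}_{H_\sigma}^{S_n}\bigl(\chi^{h^\sigma_1} \otimes \cdots \otimes \chi^{h^\sigma_d}\bigr)$, where $h^\sigma_i = (a_i|b_{\sigma(i)})$ is a hook and $H_\sigma = S_{|h^\sigma_1|} \times \cdots \times S_{|h^\sigma_d|}$.

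Next, combining this with Frobenius reciprocity and the Young subgroup branching rule $\chi^\mu \!\downarrow_{H_\sigma} = \sum c^\mu_{\mu^{(1)},\ldots,\mu^{(d)}}\, \chi^{\mu^{(1)}} \otimes \cdots \otimes \chi^{\mu^{(d)}}$ (and the analogous one for $\chi^\nu$), together with the multiplicativity $\operatorname{Res}(\chi^\mu \cdot \chi^\nu) = \operatorname{Res}(\chi^\mu) \cdot \operatorname{Res}(\chi^\nu)$, I would arrive at
$$g(\la,\mu,\nu) \ \le \ \sum_{\sigma \in S_d} \sum_{(\mu^{(i)}),\,(\nu^{(i)})} c^\mu_{\mu^{(1)},\ldots,\mu^{(d)}} \, c^\nu_{\nu^{(1)},\ldots,\nu^{(d)}} \prod_{i=1}^d g\bigl(h^\sigma_i, \mu^{(i)}, \nu^{(i)}\bigr).$$
Each inner factor $g(h,\alpha,\beta)$ features a hook $h$, which has Durfee square one; such hook Kronecker coefficients admit polynomial bounds, either by appealing to existing explicit formulas for Kronecker products with hooks, or by iterating the Giambelli decomposition on $\alpha$ and $\beta$ and reducing to the few-row setting of \Cref{t:PP_ct_main}.

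The main obstacle, and the key new input, is a polynomial-in-$n$ bound of degree depending only on $k$ on the multi-fold Littlewood--Richardson coefficients $c^\mu_{\mu^{(1)},\ldots,\mu^{(d)}}$, together with a polynomial bound on the number of tuples $(\mu^{(i)})$ contributing nontrivially, under the assumption $d(\mu) \le k$. To establish these I would exploit the Durfee square decomposition of $\mu$ itself into an arm and a leg, each with at most $k$ parts, and combine the LR rule with the hive or puzzle model to show that the relevant LR fillings live in a combinatorial space of polynomially bounded size. Once this polynomial LR estimate --- exactly the sort of ``new estimate on Littlewood--Richardson coefficients of independent interest'' advertised in the introduction --- is in place, combining the $O(1)$ many Giambelli terms, the polynomially bounded LR multicoefficients, and the hook Kronecker bounds yields $g(\la,\mu,\nu) \le n^{c(k)}$, completing the proof.
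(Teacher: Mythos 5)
Your route via Giambelli is genuinely different from the paper's. The paper does not decompose $s_\la$ into hooks; instead, for a small-Durfee partition $\nu$ it uses the Durfee (arm-and-leg) factorization $\nu = \al\cup\be'$, notes that $s_\al s_{\be'} = s_\nu + (\text{Schur positive})$, and bounds $g(\la,\mu,\nu) \le \langle s_\la * (s_\al s_{\be'}), s_\mu\rangle$; a single application of Littlewood's identity then produces two-fold LR coefficients and Kronecker coefficients in which two of the three arguments have at most $k$ rows, which is handled by \Cref{c:rows} (plus a conjugation lemma). Your approach instead writes $s_\la$ as a $d!$-term signed sum of products of $d\le k$ hook Schur functions and applies the $d$-fold Littlewood identity. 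The algebraic skeleton is correct: the sign issue is harmless since each $\sigma$-term is nonnegative, so $g(\la,\mu,\nu) \le \sum_\sigma A_\sigma$, and the $\mu^{(i)},\nu^{(i)}$ appearing are contained in $\mu,\nu$ and hence themselves have Durfee size $\le k$. What Giambelli buys you is a uniform ``one-shot'' decomposition without case analysis; what the paper's two-piece decomposition buys is the ability to invoke the already-proved few-rows bound (\Cref{t:PP_ct_main}) immediately and to keep the LR estimates two-fold.

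There are two concrete gaps. First, the key LR estimate --- a polynomial-in-$n$ bound on $c^\mu_{\mu^{(1)},\ldots,\mu^{(d)}}$ when $d(\mu)\le k$ --- is asserted with only a gesture at hives/puzzles, whereas it is precisely the technical heart of the matter; the paper proves the two-fold version as \Cref{l:lr_bounds_k_durfee} (via the Pieri rule and skew Kostka bounds, \Cref{l:kostka_bound}--\Cref{l:lr_bounds_k_durfee}), and the $d$-fold version you need would follow by iterating it, but you would have to actually carry this out. Second, your plan to handle the residual $g(h,\al,\be)$ (with $h$ a hook, $d(\al),d(\be)\le k$) by ``reducing to the few-row setting of \Cref{t:PP_ct_main}'' does not work as stated: a hook has Durfee square $1$ but can have arbitrarily many rows, so Giambelli never lands you in the few-rows regime. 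The correct bottoming-out is to iterate Giambelli once more on $\al$, which (after the $d$-fold Littlewood expansion applied to the hook $h$, whose multi-LR coefficients are easily seen to be polynomially bounded since all constituents are sub-hooks of $h$) reduces everything to Kronecker coefficients with two hook arguments, and then invoke the Remmel--Rosas description \cite{Rem,Ros} to see that these are bounded by an absolute constant. With these repairs the approach should go through, though the resulting exponent $c(k)$ would likely be worse than the paper's $4k^3 + O(k^2)$ because of the extra $k!$ Giambelli terms and the extra layer of iterated LR bounds.
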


\smallskip

The contingency arrays estimates we used in the proof of
Theorem~\ref{t:PP_ct_main} are inapplicable in this case.  Using symmetric functions techniques, here we prove the
conjecture with an explicit constant \ts $c(k)$.

\smallskip

\begin{thm}\label{t:main}
Let \ts $n, k\ge 1$, and let \.  $\la,\mu,\nu\vdash n$, such that \. $d(\la), \ts d(\mu), \ts d(\nu) \. \le \ts k$.  Then:
\begin{equation}\label{eq:main-upper}
g(\la,\mu,\nu) \, \leq \, \frac{1}{k^{8k^2}\ts 2^{8k^3}} \,\, n^{4k^3\ts + \ts 13k^2 \ts + \ts 31k}.
\end{equation}
\end{thm}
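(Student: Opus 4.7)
The plan is to upper-bound $g(\la,\mu,\nu)$ by the Kronecker coefficient of an induction-product character built from the row decomposition of $\la$ along its Durfee square, and then to iterate this reduction until each remaining Kronecker factor involves three partitions with at most $k$ rows; Corollary~\ref{c:rows} then supplies the polynomial bound. Concretely, for $\la$ with $d(\la)\le k$, set $\la^{[1]}:=(\la_1,\dots,\la_k)$ and $\la^{[2]}:=(\la_{k+1},\la_{k+2},\dots)$. Then $\ell(\la^{[1]})\le k$, while $d(\la)\le k$ forces $\la^{[2]}_1=\la_{k+1}\le k$, so $(\la^{[2]})'$ has at most $k$ parts. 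The Littlewood--Richardson rule applied to the skew shape $\la/\la^{[1]}$ shows that the unique LR filling places all $j$'s in its $j$-th nonempty row, so $c^\la_{\la^{[1]},\la^{[2]}}=1$, and nonnegativity of Kronecker coefficients yields
$$
g(\la,\mu,\nu)\ \le\ g\bigl(\chi^{\la^{[1]}}\odot\chi^{\la^{[2]}},\,\chi^\mu,\,\chi^\nu\bigr),
$$
where $\odot$ denotes induction from $S_{|\la^{[1]}|}\times S_{|\la^{[2]}|}$.

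Branching plus Frobenius reciprocity gives the standard identity
\begin{align*}
g\bigl(\chi^\alpha\odot\chi^\beta,\chi^\mu,\chi^\nu\bigr)
\ =\ \sum_{\mu^i,\nu^i} c^\mu_{\mu^1,\mu^2}\,c^\nu_{\nu^1,\nu^2}\cdot g(\alpha,\mu^1,\nu^1)\cdot g(\beta,\mu^2,\nu^2),
\end{align*}
summed over $|\mu^1|=|\nu^1|=|\alpha|$ and $|\mu^2|=|\nu^2|=|\beta|$. Apply this with $(\alpha,\beta)=(\la^{[1]},\la^{[2]})$, and then invoke the conjugation symmetry $g(\la^{[2]},\mu^2,\nu^2)=g((\la^{[2]})',(\mu^2)',\nu^2)$ to convert the few-columns partition $\la^{[2]}$ into the few-rows partition $(\la^{[2]})'$. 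Each $\la$-type partition in the resulting sum now has at most $k$ rows.

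The partitions $\mu^i$ and $\nu^i$ appearing are subpartitions of $\mu$ and $\nu$ respectively, and $\mu^i\subseteq\mu$, $\nu^i\subseteq\nu$ force $d(\mu^i)\le d(\mu)\le k$ and $d(\nu^i)\le d(\nu)\le k$ (since $\mu^i_j\ge j$ implies $\mu_j\ge\mu^i_j\ge j$). Hence we may repeat the same decomposition and conjugation on each of them. After a bounded number of rounds the upper bound takes the form
$$
g(\la,\mu,\nu)\ \le\ \sum\Bigl(\prod\text{LR coefficients}\Bigr)\prod_j g(\alpha_j,\beta_j,\gamma_j),
$$
in which every surviving factor $g(\alpha_j,\beta_j,\gamma_j)$ has all three partitions with at most $k$ rows. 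Corollary~\ref{c:rows} bounds each such factor by $n^{k^3}$, and the four resulting Kronecker factors contribute the leading $n^{4k^3}$ of the final estimate.

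What remains is to control the Littlewood--Richardson coefficients and count the outer sum. The key auxiliary input---the ``new LR estimates of independent interest'' advertised in the introduction---is a polynomial-in-$n$ bound of degree $O(k^2)$ for $c^\mu_{\mu^1,\mu^2}$ whenever $\mu$ has Durfee square at most $k$, obtained by counting LR tableaux (or Gelfand--Tsetlin-type patterns) constrained by the Durfee structure of $\mu$. The number of admissible splittings $(\mu^1,\mu^2)$ and $(\nu^1,\nu^2)$ at each round is polynomial of degree $O(k)$. Multiplying the per-term Kronecker bound against the product of LR bounds (contributing the $13k^2$) and the count of terms (contributing the $31k$) yields the asserted estimate $n^{4k^3+13k^2+31k}$ together with the explicit constant factor. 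The main obstacle is establishing sharp enough LR estimates in the Durfee-bounded regime; the Kronecker reduction above is essentially a clean application of nonnegativity plus the induction-product identity, but matching the stated exponent demands tight control of the growth of $c^\mu_{\mu^1,\mu^2}$.
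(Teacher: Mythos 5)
Your overall strategy is the paper's: split a Durfee-bounded partition into its top $k$ rows ($\la^{[1]}$, few rows) and the remainder ($\la^{[2]}$, few columns), observe $c^\la_{\la^{[1]},\la^{[2]}}\ge 1$ so that the Kronecker coefficient is bounded by the one for the induction product, apply the branching/Frobenius identity (this is exactly the paper's Littlewood identity~\eqref{eq:littlewood} in character language), conjugate to convert columns to rows, and iterate. You also correctly identify the two missing ingredients you'd need: an LR bound in the Durfee-constrained regime (the paper's Lemma~\ref{l:lr_bounds_k_durfee}) and polynomial control on the number of splittings. So far this matches the paper.

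There is, however, a genuine gap in the terminal analysis. You claim that after a bounded number of rounds ``every surviving factor $g(\alpha_j,\beta_j,\gamma_j)$ has all three partitions with at most $k$ rows,'' and that Corollary~\ref{c:rows} then finishes each of them. This is false, because of a conjugation-parity obstruction. Each split produces one branch tagged ``few rows'' and one branch tagged ``few columns,'' and the symmetry $g(\la,\mu,\nu)=g(\la',\mu',\nu)$ only lets you flip two slots at a time, so the parity of the number of ``few columns'' slots in a terminal Kronecker factor is invariant under conjugation. Concretely, after the three rounds of splitting you get $2^3=8$ terminal factors; four of them have an odd number (namely one) of ``few columns'' slots and cannot be brought into the all-$\le k$-rows form needed for Corollary~\ref{c:rows}. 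For these, the paper introduces a separate easy but essential bound, Lemma~\ref{l:kron_transp}: if $\ell(\la),\ell(\mu),\ell(\nu)\le k$ then $g(\la,\mu,\nu')\le 2^{k^3}$, proved from the triple Cauchy identity~\eqref{eq:cauchy3} after applying $\omega$ in one set of variables. Without this lemma the four ``mixed-parity'' factors are unbounded in your scheme, and if instead you (incorrectly) applied Corollary~\ref{c:rows} to all eight factors you would get $n^{8k^3}$, not the stated $n^{4k^3}$. So the counting ``four factors contribute $n^{4k^3}$'' is exactly right only because the other four are handled by Lemma~\ref{l:kron_transp} and contribute a constant; your outline does not account for them.

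A minor secondary point: you'd also need the $\le k$-row LR bound (the paper's Lemma~\ref{l:lr_bounds_k_rows}), not just the Durfee one, since in the inner rounds the partitions being LR-decomposed already have $\le k$ rows, and the sharper row-bound is needed to keep the exponent at $13k^2$ rather than something larger. But the main missing idea is the $g(\la,\mu,\nu')\le 2^{k^3}$ estimate and the parity reasoning that tells you which terminal factors it covers.
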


\smallskip

Note that the upper bound~\eqref{eq:main-upper}
is slightly weaker than the upper bound in \eqref{eq:cor-rows}, but only by
constant~$4$ is the leading term.  In fact, Corollary~\ref{c:rows}
is crucial for the proof of Theorem~\ref{t:main}.

\smallskip

To appreciate the power of the theorem, compare it with the previous bounds.
For example, when \ts $\la=(m+1, 1^m)$ \ts  is a hook of size \ts $n=2m+1$, so \ts $d(\la)=1$,
the dimension bound is exponential:
$$g(\la,\la,\la) \. \le \. f^\la \. = \. \tbinom{2m}{m} \. = \. \Theta\big(2^{n}/\sqrt{n}\big).
$$
By contrast, Theorem~\ref{t:main}  gives a polynomial upper bound: \. $g(\la,\la,\la)\le n^{40}$.
In fact, it is known that \ts  $g(\la,\la,\la)=1$ \ts  in this special case,
see e.g.~\cite{Rem,Ros}.
% when \ts $\ell m r \le n$, we have $g(\la,\mu,\nu) \le 4^n$ and

Similarly, in \cite[Prop.~5.9]{PP_ct}, we used contingency tables and an ad hoc orbit counting argument
to give a weakly exponential upper bound for the case when \ts $\nu$ \ts is a hook and \ts $\la,\mu$ \ts
are \defng{double hooks}, i.e.\ $d(\la), d(\mu) \le 2$:
$$g(\la,\mu,\nu) \, \le \, n^{450} \ts p(n)^{400} \, = \, e^{\Theta(\sqrt{n})}\ts,
$$
where \. $\la,\mu, \nu \vdash n$ \. and \ts $p(n)$ \ts is the number
of partitions of~$n$. By contrast, Theorem~\ref{t:main} gives a polynomial upper bound:
\. $g(\la,\mu,\nu)\le n^{146}/2^{96}$.\footnote{The constant 146 is likely very far from optimal,
and we make no effort to improve it as we are mostly interested in the asymptotic estimates.}

\smallskip

In the opposite direction, let us show that the bound in \eqref{eq:cor-rows} is tight up to the
lower order terms in the following strong sense.  Let
$$
\rA(n,k) \, := \, \max \. \big\{ \ts g(\la,\mu,\nu) \ : \ \la,\mu,\nu\vdash n \ \
\text{and} \ \ \ell(\la),\ell(\mu),\ell(\nu)\le k \ts \big\}.
$$

\smallskip

\begin{thm}%[{\rm cf.\ Theorem~\ref{t:lower_bound}}]
\label{t:intro-rows-lower}
For all \ts $k\ge 1$, there is a constant \ts $C_k>0$, such that
\begin{equation}\label{eq:main-lower-A}
\rA(n,k) \, \geq \, C_k \. n^{k^3\ts - \ts 3k^2 \ts - \ts 3k \ts + \ts 3}  \qquad
\text{for all \ \ $n\ge 1$\ts.}
\end{equation}
\end{thm}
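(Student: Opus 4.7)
The approach is to average via a Cauchy-type identity for Kronecker coefficients:
\begin{equation}
\label{eq:plan-cauchy}
\prod_{i,\,j,\,\ell \,\in\, [k]} \frac{1}{\ts 1-x_i y_j z_\ell\ts} \, = \, \sum_{\la,\mu,\nu} g(\la,\mu,\nu) \. s_\la(x) \. s_\mu(y) \. s_\nu(z).
\end{equation}
This is immediate from the standard expansion $s_\nu(xy) = \sum_{\la,\mu} g(\la,\mu,\nu)\ts s_\la(x)\ts s_\mu(y)$ (where $xy$ denotes the multiset of products $x_iy_j$) combined with Cauchy's identity applied to the variables $xy$ and $z$. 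Specializing $x_i=y_j=1$ and $z_\ell=t$ for $i,j,\ell \in [k]$ and using $s_\nu(t,\ldots,t) = t^{|\nu|}\ts s_\nu(1^k)$ yields
\[
\frac{1}{(1-t)^{k^3}} \; = \; \sum_{n\ge 0} t^n \sum_{\substack{\la,\mu,\nu\,\vdash\,n \\ \ell(\la),\ell(\mu),\ell(\nu)\,\le\, k}} g(\la,\mu,\nu) \. s_\la(1^k) \. s_\mu(1^k) \. s_\nu(1^k),
\]
since $s_\la(1^k)=0$ whenever $\ell(\la)>k$. Extracting $[t^n]$ gives the identity with total mass $\binom{n+k^3-1}{k^3-1} = \Theta_k\big(n^{k^3-1}\big)$ on the right.

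I would then bound the individual factors uniformly. By the Weyl dimension formula $s_\la(1^k)=\prod_{1\le i<j\le k}(\la_i-\la_j+j-i)/(j-i)$, each of the $\binom{k}{2}$ factors is at most $n+k$, hence $s_\la(1^k) \le (n+k)^{\binom{k}{2}}$ uniformly for $\la\vdash n$ with $\ell(\la)\le k$. The number of partitions of $n$ with at most $k$ parts is at most $\binom{n+k-1}{k-1} \le C'_k\ts n^{k-1}$. Applying the pigeonhole inequality
\[
\binom{n+k^3-1}{k^3-1} \; \le \; \rA(n,k) \cdot \binom{n+k-1}{k-1}^{\!3} \cdot \max_{\la,\mu,\nu}\, s_\la(1^k)\, s_\mu(1^k)\, s_\nu(1^k)
\]
to the identity gives
\[
\rA(n,k) \; \ge \; \widetilde C_k\ts n^{\,k^3 - 1 - 3(k-1) - 3\binom{k}{2}} \; = \; \widetilde C_k\ts n^{(2k^3 - 3k^2 - 3k + 4)/2}.
\]

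The exponent $(2k^3-3k^2-3k+4)/2$ exceeds the claimed $k^3-3k^2-3k+3$ by $(3k^2+3k-2)/2$, which is at least $2$ for every $k\ge 1$. Hence \eqref{eq:main-lower-A} follows immediately, with room to spare. The only step requiring nontrivial input is the Cauchy-type identity \eqref{eq:plan-cauchy}, which is standard; the remaining manipulations are routine, and no delicate combinatorial construction is needed. If a sharper exponent were desired, one could replace the crude max bound by the exact generating function $\sum_{\la\vdash n,\,\ell(\la)\le k} s_\la(1^k)\,t^{|\la|} = ((1-t)^k(1-t^2)^{\binom{k}{2}})^{-1}$ and compute the sum directly, but that refinement is not needed for \eqref{eq:main-lower-A}.
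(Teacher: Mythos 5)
Your proof is correct, and it takes a genuinely different route from the paper's. The paper (Section~4.2) writes \(h_{a^k}[xy]=\sum_{\la} K_{\la,a^k}\ts s_\la[xy]\) with \(n=ak\) and extracts the single coefficient \([x_1^a\cdots x_k^a\,y_1^a\cdots y_k^a]\), interpreting the left side as the number of integer points in a three-way transportation polytope \(T_k(a)\); it then invokes Ehrhart theory (to lower-bound that count by \(G_k\ts a^{k^3-3k}\)) together with the Kostka bound \(K_{\la,a^k}\leq a^{k^2-k}\) to conclude. You instead set \(x_i=y_j=1\), \(z_\ell=t\) in the triple Cauchy identity~\eqref{eq:cauchy3} and read off \([t^n]\), a fully averaged version that sidesteps both Ehrhart theory and Kostka estimates; the only inputs are the Weyl dimension formula bound \(s_\la(1^k)\leq (n+k)^{\binom{k}{2}}\) and the count \(\binom{n+k-1}{k-1}=\Theta_k(n^{k-1})\) of partitions with at most \(k\) parts, both standard and correctly applied. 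The trade-off is interesting: your left-hand side \(\binom{n+k^3-1}{k^3-1}=\Theta_k(n^{k^3-1})\) is larger than the paper's \(\Theta_k(a^{k^3-3k})\), but your weights \(s_\la(1^k)\) are also larger than the paper's \(K_{\la,a^k}\); the balance works out in your favor, and you obtain exponent \((2k^3-3k^2-3k+4)/2\), which strictly exceeds the paper's \(k^3-3k^2-3k+3\) by \((3k^2+3k-2)/2\geq 2\). In particular, already for \(k=2\) your argument yields \(\rA(n,2)=\Omega(n)\), whereas the paper's exponent \(-7\) is vacuous there. The only point I would make explicit in the write-up is that the \(\Theta_k\) asymptotics give the stated inequality for \(n\) large, and for the finitely many small \(n\) one shrinks \(C_k\), using \(\rA(n,k)\ge g((n),(n),(n))=1>0\).
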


\smallskip

In other words, the theorem says that for all $n$,
there exist partitions \. $\la,\mu,\nu \vdash n$, such that
\. $\ell(\la), \ts \ell(\mu), \ts \ell(\nu) \le k$ \ts and we have \ts $g(\la,\mu,\nu) \ge$ \ts \RHS of~\eqref{eq:main-lower-A}.
In particular, the theorem implies that the upper bound~\eqref{eq:main-upper} is tight
up to a constant~$4$ in the leading term, cf.~$\S$\ref{ss:finrem-const}.

\medskip

\subsection{Symmetric Kronecker problem}\footnote{Here
``symmetric'' refers to the problem, not the coefficient. The \defng{symmetric
Kronecker coefficients} are defined in \cite{BCI}, and not studied in this paper.}
\label{ss:intro-sym}
Let
$$
\rK(n) \, := \, \max \. \big\{ \ts g(\la,\mu,\nu) \ : \ \la,\mu,\nu\vdash n \ts \big\}
$$
denote the \defn{maximal Kronecker coefficient}.  It was shown by Stanley \cite[slide~44]{Stanley-kron},
that\footnote{Here and throughout the paper we use \ts $f=g\ts e^{-O(n^\al)}$ \ts to mean
that there is a universal constant \ts $c>0$ \ts such that \ts $f \ge g \ts e^{-cn^\al}$ \ts for all \ts $n\ge 1$.
The inequality \ts $f=g-O(n^\al)$ \ts is defined analogously. }
\begin{equation}\label{eq:Kron-max}
\rK(n) \,  = \, \sqrt{n!} \, e^{-O(\sqrt{n})}.
\end{equation}
Later, it was shown in~\cite{PPY} that the maximum can only occur
when all three partitions have \defng{Vershik--Kerov--Logan--Shepp shape},
and their limit curve is of course self-conjugate, see e.g.~\cite{Rom}.  It is thus natural to ask whether
the following  maximal
Kronecker coefficients for the \defn{symmetric} and \defn{fully symmetric} problem have the same asymptotics as~$\rK(n)$:
$$
\aligned
\rKs(n) \, &:= \, \max \. \big\{ \ts g(\la,\la,\la) \ : \ \la\vdash n \ts \big\}, \quad \text{and} \\
\rKfs(n) \, & := \, \max \. \big\{ \ts g(\la,\la,\la) \ : \ \la\vdash n, \ \la=\la' \ts \big\}.
\endaligned
$$
Clearly, \. $\rKfs(n) \le \rKs(n) \le \rK(n)$.

We showed in \cite[$\S$6.3]{PP_ct}, that \.
$\rKs(n) =  e^{\Omega (n^{2/3})}$ \. using an explicit construction
and the asymptotics of plane partitions.\footnote{The statement of Theorem~1.3
in \cite{PP_ct} has an error: we prove the lower bound for \ts $\rKs(n)$,
not \ts $\rKfs(n)$ \ts as claimed in the theorem.}
Although no nontrivial lower bound was known for \ts $\rKfs(n)$, we
(somewhat audaciously) stated:

\smallskip

\begin{conj}[{\rm \cite[Conj.~6.7]{PP_ct}}]\label{conj:rKfs}
$$\log \ts \rKfs(n) \, = \, \frac12 \. n \log n \. - \. O(n).
$$
\end{conj}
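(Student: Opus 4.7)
The upper bound $\log \rKfs(n) \leq \tfrac{1}{2}\ts n\log n + O(\sqrt n)$ is immediate, since $\rKfs(n) \leq \rK(n)$ and Stanley's bound \eqref{eq:Kron-max} gives
\[
\log \rK(n) \, = \, \tfrac{1}{2}\log(n!) \. - \. O(\sqrt n) \, = \, \tfrac{n}{2}\log n \. - \. \tfrac{n}{2} \. + \. O(\sqrt n)\ts.
\]
The content of the conjecture is therefore the matching lower bound: exhibit self-conjugate partitions $\La \vdash n$ with $g(\La,\La,\La) \geq \sqrt{n!}\,e^{-O(n)}$.

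My plan is to start from Stanley's near-extremal triple and symmetrize it. By \eqref{eq:Kron-max} combined with \cite{PPY}, for each $n_0$ there exist partitions $\al,\be,\ga \vdash n_0$, all of approximate Vershik--Kerov--Logan--Shepp shape, with $g(\al,\be,\ga) \geq \sqrt{n_0!}\,e^{-O(\sqrt{n_0})}$. Form the self-conjugate partition
\[
\La \, := \, \al \. + \. \al' \. + \. \be \. + \. \be' \. + \. \ga \. + \. \ga' \, \vdash \, 6n_0\ts,
\]
which satisfies $\La = \La'$ since $(\la + \mu)' = \la' + \mu'$. Apply a Kronecker semigroup/monotonicity inequality (of Manivel type) to deduce $g(\La,\La,\La) \geq g(\al,\be,\ga)$. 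Setting $n = 6n_0$ and applying Stirling yields $\log \rKfs(n) \geq \tfrac{1}{12}\ts n\log n - O(n)$, which has the correct $n\log n$ form but an off-by-a-factor-of-six leading constant.

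The \emph{main obstacle} is closing the gap between $\tfrac{1}{12}$ and $\tfrac{1}{2}$. Two strategies appear viable. First, exploit the rigidity of the VKLS limit shape to argue that Stanley's extremal triple is already nearly self-conjugate ($\al \approx \al'$, etc.), so that an $O(\sqrt n)$-size perturbation symmetrizes the triple and one obtains $n \approx n_0$ rather than $n = 6n_0$; then the size inflation disappears and the constant $\tfrac{1}{2}$ emerges directly. Second, establish a multiplicative Kronecker inequality
\[
g(\la_1+\la_2,\mu_1+\mu_2,\nu_1+\nu_2) \, \geq \, g(\la_1,\mu_1,\nu_1) \cdot g(\la_2,\mu_2,\nu_2)\ts,
\]
which would give a sixfold multiplicative gain that exactly compensates the sixfold size increase in the construction above. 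The authors' hint that the argument is ``combinatorial, based on the monotonicity property'' and then ``derives the first nontrivial lower bounds in several explicit examples'' suggests some hybrid of these two ideas: work with explicit near-staircase self-conjugate partitions where $g(\la,\la,\la)$ can be estimated by hand, and then use iterated additive monotonicity to bootstrap these seed examples into families of self-conjugate partitions of every size~$n$, preserving enough of the Kronecker mass that Stirling delivers the sharp constant $\tfrac{1}{2}$.
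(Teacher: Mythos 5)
The statement you are trying to prove is a \emph{conjecture}, not a theorem of the paper: the paper itself does not prove it, and only establishes the weaker Theorem~\ref{t:Kron-fs-asy}, namely \. $\log\rKfs(n) \ge \frac{1}{16+\ve}\ts n\log n - O(n)$. Your plan correctly isolates the content (a lower bound on $\rKfs$ via monotonicity and symmetrization), and your general outline is in the same spirit as the paper's proof of the theorem, but it contains a concrete error. The identity $(\la+\mu)' = \la'+\mu'$ is false; what holds is $(\la+\mu)' = \la'\cup\mu'$ (and dually $(\la\cup\mu)'=\la'+\mu'$). Consequently your candidate $\La = \al+\al'+\be+\be'+\ga+\ga'$ satisfies $\La' = \al'\cup\al\cup\be'\cup\be\cup\ga'\cup\ga$, which is not $\La$ in general, so $\La$ is not self-conjugate and the construction does not land in the domain of $\rKfs$. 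Producing a genuinely self-conjugate partition requires mixing $+$ with $\cup$; the paper does this (Lemma~\ref{l:Kron-fsym}) by first forming $\la := \al+\be+\ga$ via monotonicity, then applying the ``long formula'' that alternates Theorem~\ref{t:manivel} with the conjugation symmetry~\eqref{eq:sym-mono} to reach $\mu := (\de_k + 2\la)\cup(2\la)'$, which \emph{is} self-conjugate when $\ell(\la)\le k$. The resulting size inflation $n\mapsto 3n \mapsto 12n+k^2 \approx (16+\ve)n$ (using $k\approx(2+\ve)\sqrt n$ from the VKLS constraint) is exactly where the constant $\frac{1}{16+\ve}$ of Theorem~\ref{t:Kron-fs-asy} comes from.

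Beyond that, neither of your two proposed strategies for reaching the constant $\frac12$ is available. The first (perturb a VKLS-shape extremizer to a nearby self-conjugate partition of roughly the same size without losing the Kronecker mass) is not supported by any known continuity of $g(\la,\mu,\nu)$ in the shape; Kronecker coefficients are notoriously non-robust under small perturbations. The second (a multiplicative bound $g(\la_1+\la_2,\mu_1+\mu_2,\nu_1+\nu_2)\ge g(\la_1,\mu_1,\nu_1)\cdot g(\la_2,\mu_2,\nu_2)$) is a far stronger statement than the max-version semigroup/monotonicity property of Theorem~\ref{t:manivel}, and it is not a known result; if it were true it would indeed collapse the size-inflation loss, which is precisely why Conjecture~\ref{conj:rKfs} remains open. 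In short: your diagnosis of where the factor is lost is correct, your construction needs the $\cup$-based fix, and the remaining gap between $\frac{1}{16}$ and $\frac{1}{2}$ is the actual open problem.
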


\smallskip

Here we give a surprisingly simple proof of a superexponential
lower bound, resolving the problem up to a constant factor
in the logarithm.

\smallskip

\begin{thm}\label{t:Kron-fs-asy}
For all \ts $\ve >0$, we have:
$$
\log \ts \rKfs(n) \, \ge  \, \frac{1}{(16 \. + \. \ve)} \, n \log n \. - \. O(n).
$$
\end{thm}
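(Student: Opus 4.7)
The plan is to derive the bound from Stanley's estimate~\eqref{eq:Kron-max} on the \emph{maximum} Kronecker coefficient $\rK(n)$, combined with a symmetrization step built around the \emph{monotonicity} of Kronecker coefficients.

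\textbf{Step 1 (Large unrestricted coefficient).} By~\eqref{eq:Kron-max}, for each $N$ there exist partitions $\alpha,\beta,\gamma\vdash N$ with
\[
\log\ts g(\alpha,\beta,\gamma) \. \ge \. \tfrac12 \. N\log N \. - \. O(N).
\]

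\textbf{Step 2 (Symmetrization by monotonicity).} From such a triple I would build a self-conjugate partition $\la\vdash n$ with $n\le cN+o(N)$ for an absolute constant $c$, and argue via Kronecker monotonicity that
\[
g(\la,\la,\la) \. \ge \. g(\alpha,\beta,\gamma).
\]
A natural construction takes the six partitions $\alpha,\beta,\gamma,\alpha',\beta',\gamma'$ and places their Young diagrams symmetrically about the main diagonal of a sufficiently large ambient self-conjugate template, padding minimally so that the resulting shape equals its own conjugate. Iterated application of the basic monotonicity \ts $g(\la,\mu,\nu)\le g(\la+\pi,\mu+\pi,\nu+\pi)$ \ts with a common dominant padding $\pi$ then transports the large coefficient from the asymmetric triple to the fully symmetric diagonal~$(\la,\la,\la)$.

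\textbf{Step 3 (Conclusion).} Substituting $n=cN+o(N)$ gives
\[
\log\ts g(\la,\la,\la) \. \ge \. \tfrac{1}{2c}\. n\log n \. - \. O(n),
\]
so any construction achieving $c\le 8+\varepsilon/2$ yields the claimed bound $\log\rKfs(n)\ge\tfrac{1}{16+\varepsilon}\ts n\log n-O(n)$; the slack in $\varepsilon$ absorbs constants arising from the padding and lower-order terms.

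The main obstacle is Step 2: the explicit symmetric construction must be tight enough to keep the blow-up factor $c$ close to $8$, \emph{and} the monotonicity step must be engineered so that the inequality survives the passage from the asymmetric triple $(\alpha,\beta,\gamma)$ to the fully symmetric diagonal $(\la,\la,\la)$. Finding a common padding (or short sequence of paddings) that respects self-conjugacy while preserving the coefficient inequality appears to be the delicate combinatorial core of the proof, and the reason the surprising-yet-simple argument trades a factor of $8$ against Stanley's $\tfrac12$.
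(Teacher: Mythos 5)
Your high-level plan matches the paper's: use Stanley's bound on $\rK(n)$, then symmetrize via the monotonicity property to produce a self-conjugate $\lambda$ carrying a comparable coefficient. Step~1 and Step~3 are fine. But Step~2, which you correctly flag as the delicate core, is exactly where the proposal has a genuine gap, and the construction you sketch would not close it.

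First, the proposal never controls the length of the extremal triple $\alpha,\beta,\gamma\vdash N$, and without that control the symmetrization overhead is fatal. Any self-conjugate shape containing $\alpha$ must also contain $\alpha'$, so its Durfee square has side at least $\max\{\ell(\alpha),\alpha_1\}$; for a generic extremal triple this can be linear in $N$, making the Durfee square of size $\Theta(N^2)$ and sending your blow-up factor $c$ to infinity. The paper's proof crucially invokes the VKLS-shape result from \cite{PPY}: the extremal triples are Plancherel and hence have length $\le (2+\varepsilon)\sqrt{N}$ for large $N$, so that the ambient square $\delta_k$ with $k=(2+\varepsilon)\sqrt{N}$ costs only $k^2=(4+O(\varepsilon))N$ extra cells. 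Your proposal mentions none of this.

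Second, the ``common dominant padding $\pi$'' mechanism you suggest is not the right use of the monotonicity property. Theorem~\ref{t:manivel} allows adding a \emph{triple} $(\alpha,\beta,\gamma)$ with $g(\alpha,\beta,\gamma)>0$, componentwise, not a single common $\pi$, and adding the same $\pi$ to all three slots would require $g(\pi,\pi,\pi)>0$ and would never move an asymmetric triple onto the diagonal while preserving self-conjugacy. What actually works, and is the real content of the paper's Lemma~\ref{l:Kron-fsym}, is the ``long formula'': starting from $\delta_k$ (using $g(\delta_k,\delta_k,\delta_k)\ge1$), then alternating applications of the conjugation identity $g(\lambda,\mu,\nu)=g(\lambda',\mu',\nu)$ with monotonicity additions of cyclic rotations $(\beta,\gamma,\alpha)$, $(\gamma,\alpha,\beta)$, etc., which are all positive because $g(\alpha,\beta,\gamma)>0$. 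This produces $\mu=(\delta_k+2\lambda)\cup(2\lambda)'$ with $\lambda=\alpha+\beta+\gamma$, which is self-conjugate of size $k^2+12N$, giving the claimed bound. Your sketch gestures at the shape of the answer (diagrams placed symmetrically about the diagonal) but supplies neither the alternating monotonicity--conjugation mechanism nor the bounded-length input that makes the cost bookkeeping come out; as written, Step~2 cannot be carried out.
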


\medskip

\subsection{Explicit constructions}  \label{ss:intro-explicit}
A key problem we identified in~\cite{PP_ct} is an explicit construction
of partitions \ts $\la,\mu,\nu \vdash n$ \ts with \emph{large} \ts $g(\la,\mu,\nu)$.
Here by \defn{explicit construction} we mean an algorithm which outputs the triple
\ts $(\la,\mu,\nu)$ \ts in \ts poly$(n)$ time. As we mentioned above, in
\cite[$\S$6.2]{PP_ct} we gave an explicit construction of \ts $\la \vdash n$,
such that \ts $\ell(\la) = \Theta(n^{1/3})$ \ts and \ts $g(\la,\la,\la) = e^{\Omega(n^{2/3})}$.

In the \emph{symmetric case} of Kronecker coefficients \ts $g(\la,\la,\la)$ \ts
with \ts $\la=\la'$, until now very little was known.  It was shown in~\cite{BB},
that \ts $g(\la,\la,\la)\ge 1$ \ts for all \ts $\la=\la'$.  In connection with
the \defng{Saxl conjecture}, the
\defn{staircase shape} \ts $\rho_k = (k-1,\ldots,2,1)\vdash \tbinom{k}{2}$ \ts
is especially important, see \cite{Ike,LuS,PPV}.  Unfortunately, the best
lower and upper bounds in this case remain
\begin{equation}\label{eq:rho-bounds}
1\. \le \. g\big(\rho_k\ts, \ts \rho_k\ts, \ts \rho_k\big) \. \le \. f^{\rho_k} \. = \.
\sqrt{n!} \, e^{-O(n)}.
\end{equation}

While we conjecture the asymptotics on the right is the correct estimate, we are
nowhere close to proving this claim (cf.~$\S$\ref{ss:finrem-BBS}).
However, we are able to obtain lower bounds for the other two shapes
considered in~\cite{PPV} in connection with the Saxl conjecture.

\smallskip

\begin{thm}\label{t:explicit}
Let \ts $r\ge 1$, \ts $k=2^{2r+1}$, \ts $n=k^2$ \ts and let \ts
$\de_k:=(k^k)\vdash n$ \ts be the \defn{square shape}.
Then:
$$
g\big(\delta_k\ts, \ts \delta_k\ts, \ts \delta_k\big) \. \ge \. e^{\Omega(n^{1/4})}\ts.
$$
Similarly, let  \ts $r\ge 1$, \ts $k=2^{2r}-1$, \ts $n=3k^2+1$, and let
$$
\tau_k:=\big(3k-1,3k-3,\ldots,k+3,(k+1)^2,(k-1)^2,\ldots,2^2,1^2\big)\. \vdash \. n
$$
be the \defn{caret shape}.  Then:
$$
g\big(\tau_k\ts, \ts \tau_k\ts, \ts \tau_k\big) \. \ge \. e^{\Omega(n^{1/4})}\ts.
$$
\end{thm}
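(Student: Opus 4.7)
The plan is to apply the monotonicity property of Kronecker coefficients --- the same combinatorial tool that drives the proof of Theorem~\ref{t:Kron-fs-asy} --- to reduce each lower bound to a corresponding lower bound on $g(\mu,\mu,\mu)$ for a suitable self-conjugate sub-partition $\mu$ sitting inside $\delta_k$ or $\tau_k$.

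First I would verify that both shapes are self-conjugate. For $\delta_k=(k^k)$ this is transparent. For $\tau_k$, with $k=2^{2r}-1$ odd, a direct inspection of column lengths reproduces the row sequence $3k-1,3k-3,\ldots,k+3,(k+1)^2,(k-1)^2,\ldots,2^2,1^2$; this is why the shape is arranged with odd-length ``roof'' rows on top of doubled ``base'' rows of the form $j^2$. Self-conjugacy is needed so that the full symmetry $\la = \la'$ appears in all three slots of the Kronecker coefficient.

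Next, the construction underlying Theorem~\ref{t:Kron-fs-asy} produces, for each $m$ of a specific form involving powers of two, a self-conjugate $\mu \vdash m$ satisfying $\log g(\mu,\mu,\mu) = \Omega(m)$. Taking $m = \Theta(\sqrt{k}) = \Theta(n^{1/4})$, and using the arithmetic conditions $k = 2^{2r+1}$ (resp.\ $k = 2^{2r}-1$) to align the relevant divisibilities between the size of $\mu$ and the ambient shape, one embeds such a $\mu$ as a self-conjugate sub-partition of $\delta_k$ (resp.\ $\tau_k$) in a way compatible with the monotonicity hypothesis. Propagating the bound upward then gives
\[
g(\delta_k,\delta_k,\delta_k) \;\ge\; g(\mu,\mu,\mu) \;\ge\; e^{\Omega(\sqrt{k})} \;=\; e^{\Omega(n^{1/4})},
\]
and an analogous embedding of a different $\mu'$ matched to the caret geometry delivers the same bound for $\tau_k$.

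The principal obstacle is verifying, in each case, that the complementary skew shape $\delta_k/\mu$ (resp.\ $\tau_k/\mu'$) satisfies the combinatorial hypothesis of the monotonicity theorem. For $\delta_k$ the complement is essentially a union of rectangles and the verification is mostly bookkeeping, which is why the condition $k = 2^{2r+1}$ suffices by a clean power-of-two nesting. For $\tau_k$ the verification is the creative step: the peculiar row structure of $\tau_k$ is engineered precisely so that the skew complement decomposes in the form required by the monotonicity argument, and this is exactly what forces both the caret shape itself and the otherwise-unmotivated numerical condition $k = 2^{2r}-1$. I would expect the bulk of the proof to consist of this combinatorial check in the caret case.
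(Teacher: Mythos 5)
There is a genuine gap. The Manivel monotonicity property (Theorem~\ref{t:manivel}) concerns coordinate-wise sums of partitions: it gives $g(\la+\al,\mu+\be,\nu+\ga)\ge g(\la,\mu,\nu)$ when $g(\al,\be,\ga)>0$. It has no hypothesis about ``complementary skew shapes,'' and it does \emph{not} say anything for a mere sub-partition $\mu\subseteq\delta_k$. To apply it you must exhibit an additive decomposition $\delta_k=\mu+\alpha$ with $\alpha$ a partition and $g(\alpha,\alpha,\alpha)>0$. For $\delta_k=(k^k)$ this is extremely rigid: if $\mu+\alpha=(k^k)$ with both $\mu,\alpha$ partitions, then $\mu_i+\alpha_i=k$ for all $i\le k$, forcing $\mu$ and $\alpha$ to both be $k$-row rectangles; and a rectangle $(a^k)$ is self-conjugate only when $a=k$. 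So there is no small self-conjugate $\mu$ of size $\Theta(n^{1/4})$ sitting additively inside $\delta_k$, and the proposed reduction to $g(\mu,\mu,\mu)$ with $\mu$ coming from the construction behind Theorem~\ref{t:Kron-fs-asy} cannot get off the ground. (Using the $\cup$-interleaved ``long formula'' of Lemma~\ref{l:Kron-fsym} runs into the same obstruction for the square.)

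The paper's actual proof uses a different seed entirely: the explicit lower bound $g\big((2s)^{2s},(2s)^{2s},k^2\big)\ge C\,2^{2s}(2s)^{-9/2}$ from \cite[Thm~1.2]{PP2} for $k=2s^2$, $s=2^r$, which already has the desired exponential order $e^{\Omega(s)}=e^{\Omega(n^{1/4})}$. It then grows this to $g(\delta_k,\delta_k,\delta_k)$ by alternating monotonicity (doubling one dimension of a rectangle by $(a^b)=(a/2)^b+(a/2)^b$) with conjugation (to swap which dimension gets doubled), landing on $(k^k)$. The caret case is handled afterwards by recognizing $\tau_k=(\delta_{k+1}+2\rho_k)\cup 2\rho_k$, which fits the long-formula pattern with $\alpha=\rho_k$, giving $g(\tau_k,\tau_k,\tau_k)\ge g(\delta_{k+1},\delta_{k+1},\delta_{k+1})$; the bound then comes from the square case, not from a fresh embedding. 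Your proposal never invokes the rectangular bound from \cite{PP2} and instead tries to import the $\rKfs$ lower bound through a containment argument that the monotonicity theorem does not support, so the central quantitative step is missing.
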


\smallskip

Although both lower bounds are rather weak compared to what we believe to be
the correct asymptotics (see~$\S$\ref{ss:finrem-rho}), these are the first nontrivial
bounds we obtain in this case.  Note that they are weaker than the
bound \. $g(\la,\la,\la) \ge e^{\Omega(n^{2/3})}$ \. from our earlier
explicit construction.

\smallskip

\subsection{Structure of the paper}  We start with a brief
Section~\ref{s:Basic} with definitions and some background.
In the next Section~\ref{s:LR} we give estimates for the
Kostka and Littlewood--Richardson coefficients.  We then proceed to obtain bounds
on the Kronecker coefficients and prove Theorems~\ref{t:main}
and~\ref{t:intro-rows-lower} in Section~\ref{s:Kron}.  We then
prove Theorem~\ref{t:Kron-fs-asy} and give explicit constructions
(Theorem~\ref{t:explicit}) in Section~\ref{s:mono}.  In Section~\ref{s:finrem},
we conclude with final remarks, conjectures and open problems.

\bigskip

\section{Definitions and basic results}\label{s:Basic}

\smallskip

We assume the reader is familiar with the notation and
standard results in the literature, see e.g. \cite{Mac}
and~\cite[$\S$7]{EC2}.  In this section we recall several
useful basic results to help the reader navigate through
the paper.

\smallskip

\subsection{Partitions} \label{ss:Basic-part}
Let \. $\la=(\la_1,\la_2,\ldots,\la_\ell)$ \. be a \defn{partition}
of \defn{size} \. $n:=|\la|=\la_1+\la_2+\ldots+\la_\ell$, where \.
$\la_1\ge \la_2 \ge \ldots \ge\la_\ell\ge 1$.  We write
\. $\la\vdash n$ \. in this case.  As in the introduction,
let \ts $\ell(\la):=\ell$ \ts be the \defn{length} of~$\la$, and let \.
$d(\la)= \max\{k~:~\la_k \ge k\}$ \. denote the \defn{Durfee square size}.

Denote by \ts $p(n)$ \ts the number of partitions \ts $\la\vdash n$.
Let \ts $\la'$ \ts denote the \defn{conjugate partition} of $\la$.
Let \. $\la+\mu$ \. denote the \defn{sum of partitions}: \.
$(\la_1+\mu_1,\la_2+\mu_2,\ldots)$.  Similarly, let \. $\la\cup \mu$ \.
denotes the \defn{union of partitions} defined as \. $\la\cup \mu := (\la'+\mu')'$.
We write \. $\mu \subseteq \la$ \. if \. $\mu_i \le \la_i$ \. for all~$i\ge 1$.
The \defn{skew shape} \ts $\la/\mu$ \ts is the difference of two straight shapes
(Young diagrams).

We use \. $(a^b) = (a,\ldots,a)$, $b$ times, to denote the
\defn{rectangular shape}, and \ts $\rho_\ell = (\ell-1,\ldots,2,1)$ \ts
denotes the \defn{staircase shape}.  Other special partitions include the
\defn{hooks shape} $(k,1^{n-k})$ and the \defn{two-row shape} $(n-k,k)$.

\ytableausetup{boxsize=2ex}

\subsection{Basic inequalities}

Throughout the paper we will use several basic inequalities allowing us to reach the polynomial bounds. By \defn{{\em AM--GM}} we will refer to the Arithmetic vs Geometric mean inequality
$$
\frac{\sum_{i=1}^m x_i}{m} \ \geq \ \left( \prod_{i=1}^m x_i \right)^{1/m}
$$
for nonnegative real numbers $x_i$. In particular, applying this with \/ $x_i =N+1-i$ gives
\begin{equation}\label{eq:binom_exp}
k! \. \binom{N}{k} \ = \ \prod_{i=1}^k (N+1-i) \ \leq \ \left( \frac{\sum_{I=1}^k N+1-i}{k}\right)^k \ = \ \left(N -\frac{k-1}{2} \right)^k
\end{equation}
We will also use the fact that $\left(1 +\frac{x}{n}\right)^n$ is an increasing sequence in $n$ with limit $e^x$.

We will also use the log-concavity of binomial coefficients, namely
\begin{equation}\label{eq:binom_concave}
\binom{x}{k}^2 \ \geq \ \binom{x-1}{k} \binom{x+1}{k} \ \geq \ \cdots \ \ge \ \binom{x-r}{k} \binom{x+r}{k}.
\end{equation}

\smallskip

\subsection{Partition inequalities} We draw partitions as Young diagrams in the English notation.  For example, we use
 $$\ydiagram{4,3,1}$$
for \ts $\la=(4,3,1)$.
 We will use several bounds on the number of partitions which can be easily seen from this graphical representation.

The Young diagram of a partition can be determined by its boundary, which is a North-East monotone lattice path. If the partition has length $k$ and at most $n$ boxes, then, after removing its first column, it fits in an $k\times (n-k)$ rectangle  and we can bound it by the number of lattice paths as
 \begin{equation}\label{eq:partitions_length_bound}
 \# \{ \la \vdash n \, : \, \ell(\la) = k\} \ \leq \ \binom{n}{k}.
 \end{equation}
If $\la\vdash n$ and $\ell(\la) \leq k$, then the number of partitions can be bounded by the (unsorted) number of weak compositions of $n$ into $k$ parts, given by $\binom{n+k-1}{k-1}$ and so
\begin{equation}\label{eq:partitions_general_bound}
%P_k(n) \  = \
\# \{ \la \vdash n \, : \, \ell(\la) \leq k\} \ \leq \ \binom{n+k-1}{k-1} \ = \ O(n^{k-1}),
\end{equation}
 where the last equality holds when \ts $k$ \ts is fixed and \ts $n\to\infty$.

For a partition \ts $\gamma \subset \la$ with $\ell(\la) =k$ and $\la \vdash n$, then $\gamma$ can be viewed as a lattice path inside the \.
 $k \times (n-k+1)$ \. rectangle bounding $\la$.  This gives:
 $$\# \{\ga \, : \, \gamma \subseteq \la \} \ \leq \ \binom{ n-k+1}{k}.$$
For \. $\la \vdash n$ \. and \. $d(\la) \leq k$, we have then:
 \begin{equation}\label{eq:partitions_subset_bound}
 \#\{ \ga \, : \, \gamma \subseteq \la \} \ \leq \ (n/2)^{2k}.
 \end{equation}
 This follows by considering $\gamma$ as a NE lattice path from the lower left corner of $\la$ at $(0,-\ell(\la))$, through a point on the diagonal \. $(i,-i)$ \. with \. $i \in[1,k]$, to $(\la_1,0)$. For \. $\ell(\la) = \ell$, then \. $\la_1 \leq n+1- \ell$, and the total lattice path count gives
 $$
 \sum_{i=1}^k \binom{ \ell }{i} \binom{n+1-\ell}{i} \ \leq \
 \sum_{i=1}^k \binom{ \lfloor n/2 \rfloor }{i} \binom{\lceil n/2 \rceil}{i}
 \ \leq \ \sum_{i=1}^k \frac{ (n/2)^{2i} }{(i!)^2} \ \leq \ (n/2)^{2k}.
 $$
Here the first inequality follows from~\eqref{eq:binom_concave}, and the last inequality is easily seen by induction on~$k$.

 \medskip

\subsection{Kronecker coefficients} \label{ss:Basic-Kron}
Recall an equivalent definition of \defn{Kronecker coefficients}:
$$
g(\la,\mu,\nu) \. = \, \frac{1}{n!} \. \sum_{\si \in S_n} \.  \chi^\la(\si) \ts
\chi^\mu(\si) \ts \chi^\nu(\si) \ts.
$$
From here it is easy to see both the \defn{symmetry} and the \defn{conjugation} properties:
\begin{equation}\label{eq:sym-mono}
g(\la,\mu,\nu) \. = \. g(\mu,\la,\nu) \. = \. g(\la,\nu,\mu) \. = \. \ldots \qquad \text{and} \qquad
g(\la,\mu,\nu) \. = \. g(\la',\mu',\nu).
\end{equation}
We will use the following lesser known \defn{monotonicity property}, which
is an extension of the \defn{semigroup property}, see~\cite{CHM}.

\smallskip

\begin{thm}[\cite{Man}]  \label{t:manivel}
Suppose \. $\al,\be,\ga\vdash m$, such that \. $g(\al,\be,\ga)> 0$.
Then for all \. $\la,\mu,\nu\vdash m$, we have \,
$g(\la+\al,\mu+\be,\nu+\ga)\ts \geq \ts g(\la,\mu,\nu)$.
\end{thm}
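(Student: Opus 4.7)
The plan is to translate the Kronecker coefficient into a multiplicity in a $GL$-representation setting, where the natural multiplicative structure of Schur modules makes the inequality transparent. Concretely, fix $V=\mathbb{C}^{N_1}$ and $W=\mathbb{C}^{N_2}$ with $N_1$ and $N_2$ large enough to contain the relevant partitions. By Schur--Weyl duality, the Kronecker coefficient can be realized as
$$
g(\la,\mu,\nu) \; = \; \dim \mathrm{Hom}_{GL(V)\times GL(W)}\bigl(S^\mu V \otimes S^\nu W,\; S^\la(V\otimes W)\bigr),
$$
and similarly for $g(\al,\be,\ga)$. The hypothesis $g(\al,\be,\ga)>0$ therefore provides at least one nonzero equivariant map $\Psi : S^\be V \otimes S^\ga W \to S^\al(V\otimes W)$.

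The key representation-theoretic input is the standard fact that for any two partitions $\lambda,\alpha$ there is a canonical $GL$-equivariant inclusion $S^{\la+\al}U \hookrightarrow S^\la U \otimes S^\al U$ together with a canonical projection in the reverse direction, obtained by identifying highest weight vectors: the highest weight vector of $S^{\la+\al}U$ corresponds to the tensor product of the highest weight vectors of $S^\la U$ and $S^\al U$. Applying this with $U=V$, $U=W$, and $U=V\otimes W$ produces three canonical maps that we will splice together.

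Given any $\Phi \in \mathrm{Hom}_{GL(V)\times GL(W)}(S^\mu V \otimes S^\nu W, S^\la(V\otimes W))$, define $\widehat\Phi$ as the composition
$$
S^{\mu+\be}V \otimes S^{\nu+\ga}W
\;\hookrightarrow\; S^\mu V \otimes S^\be V \otimes S^\nu W \otimes S^\ga W
\;\xrightarrow{\;\Phi\otimes\Psi\;}\; S^\la(V\otimes W) \otimes S^\al(V\otimes W)
\;\twoheadrightarrow\; S^{\la+\al}(V\otimes W),
$$
where the outer maps are the canonical inclusion and projection described above. Then $\widehat\Phi$ is an element of the Hom-space whose dimension equals $g(\la+\al,\mu+\be,\nu+\ga)$, and I would show that $\Phi\mapsto\widehat\Phi$ is a linear injection from the $(\la,\mu,\nu)$-Hom-space into the $(\la+\al,\mu+\be,\nu+\ga)$-Hom-space, which is precisely the inequality claimed.

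The main obstacle is verifying injectivity of $\Phi\mapsto\widehat\Phi$. The clean way is to evaluate $\widehat\Phi$ on the highest weight vector $v_{\mu+\be}\otimes v_{\nu+\ga}$ of $S^{\mu+\be}V \otimes S^{\nu+\ga}W$: the canonical inclusion sends it to $v_\mu\otimes v_\be\otimes v_\nu\otimes v_\ga$, and after applying $\Phi\otimes\Psi$ and projecting onto the $S^{\la+\al}$-isotypic component one gets essentially the product of highest weight components of $\Phi(v_\mu\otimes v_\nu)$ and the fixed $\Psi(v_\be\otimes v_\ga)$. Since the latter is nonzero and the highest weight component of an equivariant map determines the map uniquely on the irreducible source, distinct $\Phi$ yield distinct $\widehat\Phi$. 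This highest-weight-vector bookkeeping is the only genuinely delicate point; once it is in place, the theorem follows immediately, and one also recovers the $S_3$-symmetry by the same argument applied to any permutation of the three slots.
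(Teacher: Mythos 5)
The paper does not prove this theorem; it cites it directly from Manivel, so there is no in-paper proof to compare against. That said, your approach --- realizing $g(\la,\mu,\nu)$ as $\dim\mathrm{Hom}_{GL(V)\times GL(W)}\bigl(S^\mu V\otimes S^\nu W,\ S^\la(V\otimes W)\bigr)$, and splicing a fixed nonzero $\Psi$ onto an arbitrary $\Phi$ via the Cartan inclusion $S^{\la+\al}U\hookrightarrow S^\la U\otimes S^\al U$ and the Cartan projection back --- is precisely Manivel's route (closely related arguments appear in Christandl--Harrow--Mitchison for the semigroup property). The setup, the identification of the Hom space, and the choice of maps are all correct.

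The genuine gap is in your injectivity argument, which is where the actual work lives. You claim that the projection of $\Phi(v_\mu\otimes v_\nu)\otimes\Psi(v_\be\otimes v_\ga)$ onto $S^{\la+\al}(V\otimes W)$ is ``essentially the product of highest weight components'' and hence nonzero. But $\Phi(v_\mu\otimes v_\nu)$ is a $GL(V)\times GL(W)$-highest weight vector in $S^\la(V\otimes W)$, \emph{not} a $GL(V\otimes W)$-highest weight vector; it is in general a complicated mix of $GL(V\otimes W)$-weight vectors, and the Cartan projection does not act by simply multiplying ``highest weight coefficients.'' What your argument actually needs, and does not supply, is that the Cartan multiplication
$$
S^\la(V\otimes W)\otimes S^\al(V\otimes W)\twoheadrightarrow S^{\la+\al}(V\otimes W)
$$
annihilates no product of nonzero vectors --- equivalently, that the Cartan algebra $\bigoplus_\la S^\la U$ under the Cartan product is an integral domain. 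This is a genuine theorem (the algebra is the ring of unipotent-radical invariants in $\cc[GL(U)]$, i.e.\ the homogeneous coordinate ring of the flag variety, hence a domain), and once you invoke it the injectivity falls out: if $\Phi_1\ne\Phi_2$ then $(\Phi_1-\Phi_2)(v_\mu\otimes v_\nu)\ne 0$, $\Psi(v_\be\otimes v_\ga)\ne 0$, so their Cartan product is nonzero and $\widehat{\Phi_1}\ne\widehat{\Phi_2}$. As written, the ``bookkeeping'' you flag as the delicate point is exactly where the proof is missing its central ingredient; naming and citing the no-zero-divisors property is what turns the sketch into a proof.
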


\smallskip

\subsection{Dimension bound}\label{ss:Basic-dim}
As in the introduction, denote \. $f^\la=\chi^\la(1)$, which is also the number of Standard Young Tableaux of shape $\la$.
We make frequent use of the \defn{dimension bound} \ts
$g(\la,\mu,\nu)\le f^\la \le \sqrt{n!}$, see e.g.~\cite{PPY}.
For example, we have:

\begin{proof}[Proof of Corollary~\ref{c:rows}]
If \ts $k^3\ge n$, then the result follows from the dimension bound: \ts
$g(\la,\mu,\nu)\le n! \le n^{k^3}$.
If \ts $k^3\le n$ \ts and \ts $k \geq 2$, it follows from~\eqref{t:PP_ct_main}, that
$$g(\la,\mu,\nu) \, \leq \, \left(1  + \tfrac{k^3}{n}\right)^n  \Big( 1 + \tfrac{n}{k^3}\Big)^{k^3}
\, \le \, n^{k^3}.
$$
The last inequality is a consequence of the fact that if $2e\leq a \leq b$ then
$$\left(1 + \frac{a}{b}\right)^b \left( 1+\frac{b}{a}\right)^a \leq e^a \left(1+\frac{b}{a}\right)^a  \leq b^a.$$
Finally, if \ts $k=1$, then all three partitions are hooks, and the result follows from~\cite{Ros}.
\end{proof}

\smallskip

\subsection{Symmetric functions}\label{ss:Basic-sym}
Recall the \defng{homogenous symmetric functions} \. $h_\la$\.,
\defng{elementary symmetric functions} \. $e_\la$,
\defng{monomial symmetric functions} \. $m_\la$ \.   and
\defng{Schur functions} \. $s_\la$. The \defn{Kostka numbers} can be defined as
\begin{equation}\label{eq:h-Kostka}
h_\al \, = \, \sum_{\la\vdash n} \. K_{\la,\al} \. s_\la \quad\text{for all} \quad \al\vdash n.
\end{equation}
More generally, for skew shapes \. $\la/\mu$ \. we have \defn{skew Kostka numbers}
$$
s_\mu \cdot h_\al \, = \, \sum_{\la\vdash |\mu|+|\al|} \. K_{\la/\mu,\al} \. s_\la\..
$$

The Kronecker coefficients can be equivalently defined as follows:
\begin{equation}\label{eq:schur_kron}
s_\la[xy] \, :=  \,  \sum_{\mu,\nu\vdash n} \. g(\la,\mu,\nu) \ts s_\mu(x) \ts s_\nu (y) \quad\text{for all} \quad \la\vdash n\ts,
\end{equation}
where \. $[xy] := (x_1y_1,x_1y_2,\ldots,x_iy_j,\ldots)$ \. denote all pairwise products of variables. The last identity can be written as the \defn{tripple Cauchy identity} given by
\begin{equation}\label{eq:cauchy3}
\sum_{\la,\mu,\nu} g(\la,\mu,\nu) s_\la(x) s_\mu(y) s_\nu(z) = \prod_{i,j,k} \frac{1}{1-x_iy_jz_k}.
\end{equation}

We will also need \defn{Littlewood's identity}~\cite{Lit}:
\begin{equation}\label{eq:littlewood}
s_\la * (s_\al s_\be) \, = \, \sum_{\theta \vdash|\al|, \eta\vdash |\be|} \. c^{\la}_{\theta \eta} \ts \big(s_\al*s_\theta\big) \big(s_\be* s_\eta\big)
\end{equation}
where \ts $c^{\la}_{\mu\nu}$ \ts denote the \defn{Littlewood--Richardson coefficients}, and \ts ``$*$'' \ts denotes
the \defn{Kronecker product of symmetric functions}:
$$
s_\mu \cdot s_\nu \, = \, \sum_{\la\vdash |\mu|+|\nu|} \. c^\la_{\mu\nu} \. s_\la \quad
\text{and} \quad s_\mu * s_\nu \, = \, \sum_{\la\vdash n} \. g(\la,\mu,\nu) \. s_\la\,,\quad\text{for all} \quad \mu, \ts \nu \ts \vdash n\ts.
$$

\medskip

\section{Bounds on Kostka numbers and Littlewood--Richardson coefficients}\label{s:LR}

In this section, we obtain bounds on the Littlewood--Richardson coefficients
in terms of Durfee square size of partitions.  We begin with the \defng{skew Kostka numbers}:

\smallskip

\begin{lemma}\label{l:kostka_bound}
Let \ts $\la/\mu$ \ts be a skew shape, $|\la/\mu|=m$, and let \. $\al\vdash m$.
Suppose \ts $d(\la) \le k$ \ts and \ts $\ell(\al)\leq r$.  Then:
$$
K_{\la/\mu,\al} \, \leq \, 2^{2r}\left( \frac{m}{r} \. + \.\frac{k}{2}\right)^{r(k-1)}.
$$
\end{lemma}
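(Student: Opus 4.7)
The plan is to apply the standard SSYT-as-chain interpretation: $K_{\la/\mu,\al}$ equals the number of sequences
\[
\mu \, = \, \la^{(0)} \, \subseteq \, \la^{(1)} \, \subseteq \, \cdots \, \subseteq \, \la^{(r)} \, = \, \la,
\]
where each $\la^{(i)}/\la^{(i-1)}$ is a horizontal strip of size $\al_i$. Since $\la^{(i)} \subseteq \la$, every intermediate partition inherits $d(\la^{(i)}) \le k$, so the Durfee-square constraint propagates along the chain.

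The heart of the argument is a per-step bound of the form
\[
\#\big\{\la^{(i)}\text{ extending a fixed }\la^{(i-1)}\text{ by a horiz.\ strip of size }\al_i\big\} \, \le \, 4 \. (\al_i + k/2)^{k-1}.
\]
I would prove this by parametrizing the strip through its row-increment vector $c_j := \la^{(i)}_j - \la^{(i-1)}_j$, which satisfies $c_j \ge 0$ and $\sum_j c_j = \al_i$.  The Durfee condition $d(\la)\le k$ forces all rows $j>k$ of $\la$ (and hence of $\la^{(i-1)}$ and $\la^{(i)}$) to have length at most $k$, which together with the single-cell-per-column property of horizontal strips confines the nontrivial coordinates $c_j$ to essentially $k$ positions.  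Reducing to weak compositions of $\al_i$ into $k$ parts, their count is bounded by $\binom{\al_i+k-1}{k-1}$; the leading constant $4$ absorbs two binary choices tied to the strip's interaction with the Durfee corner.  The binomial coefficient is then converted via~\eqref{eq:binom_exp} into the polynomial form $(\al_i+k/2)^{k-1}/(k-1)!$, and the harmless $(k-1)!$ in the denominator is discarded in the final estimate.

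Multiplying the $r$ per-step bounds and invoking AM--GM gives
\[
\prod_{i=1}^r (\al_i+k/2)^{k-1} \, = \, \Bigl(\prod_{i=1}^r(\al_i+k/2)\Bigr)^{\!k-1} \, \le \, \Bigl(\tfrac{1}{r}\sum_{i=1}^r(\al_i+k/2)\Bigr)^{\!r(k-1)} \, = \, \Bigl(\tfrac{m}{r}+\tfrac{k}{2}\Bigr)^{\!r(k-1)}\!\!,
\]
and combining with the cumulative constant $4^r = 2^{2r}$ produces the stated inequality.

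The main obstacle is establishing the per-step bound with exponent exactly $k-1$.  A naive enumeration of intermediate partitions with Durfee square $\le k$ via Frobenius coordinates would give an exponent of order $2k$ per step (one factor from the arm and one from the leg), and a direct count of horizontal strips via their full row profile --- which has up to $2k$ active rows when $\la$ has $\le 2k$ distinct part sizes --- gives a similar overcount.  Obtaining the improved exponent $k-1$ requires a subtler parametrization that uses the partition constraint $\la^{(i)}\subseteq \la$ jointly with the horizontal-strip rule to collapse the effective parameter space down from $\sim 2k$ to $k$ dimensions, after which the weak-composition bound $\binom{\al_i+k-1}{k-1}$ applies cleanly.
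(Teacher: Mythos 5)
Your overall strategy mirrors the paper's: interpret $K_{\la/\mu,\al}$ as counting Pieri chains, bound each horizontal-strip step by reducing to weak compositions of $\al_i$ into $k$ parts, then finish with AM--GM. However, there is a genuine gap in your per-step constant of $4$, and the justification you give ("two binary choices tied to the strip's interaction with the Durfee corner") is not correct.

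Here is the problem. Split a strip $\la^{(i)}/\la^{(i-1)}$ into the cells in rows $\le k$ and the cells in rows $> k$. The cells in rows $\le k$ are controlled by their per-row counts, which form a weak composition of size $\le \al_i$ into $k$ parts, giving the factor $\binom{\al_i+k-1}{k-1}$. The cells in rows $> k$ must lie in columns $1,\ldots,k$ (because $d(\la)\le k$ forces $\la_j\le k$ for $j>k$), and there is at most one per column, so they are described by a \emph{subset} of $\{1,\ldots,k\}$. That gives up to $2^{k}$ choices, not $4$; one cannot collapse this to two binary choices, and there are easy examples (staircase-like $\la$ with Durfee size $k$) where essentially all $2^{k-1}$ subsets are realizable. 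So the correct per-step bound is $2^{k}\binom{\al_i+k-1}{k-1}$.

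This matters because you propose to discard the $(k-1)!$ in the denominator of $\binom{\al_i+k-1}{k-1}\le (\al_i+k/2)^{k-1}/(k-1)!$ as "harmless." With a per-step constant of $2^k$, discarding $(k-1)!$ leaves $2^{rk}\big(\tfrac{m}{r}+\tfrac{k}{2}\big)^{r(k-1)}$, which is much larger than the target when $k>2$. The factorial is precisely what tames the $2^k$: the paper keeps $\frac{2^{rk}}{((k-1)!)^r}$ and only then invokes $2^{k-2}\le (k-1)!$ to reduce it to $2^{2r}$. So the $2^{2r}$ is not a per-step "$4$" but the net of trading $2^{rk}$ against $((k-1)!)^r$ at the end. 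With that correction your argument becomes the paper's proof.
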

\begin{proof}
Recall the \defng{Pieri rule} for \. $s_\mu \cdot h_a$. We have:
$$K_{\la/\mu,\al} \ =  \ \#\big\{\mu \. \subset \. \la^{(1)} \subset\. \ldots \. \subset \. \la^{(r)} \. = \. \la\big\},$$
where the set has sequences of partitions \. $\la^{(1)},\ldots, \la^{(r)}$, such that \.
$\la^{(i)}/\la^{(i-1)}$ \. is a horizontal strip of size \ts $\al_i$, which can be zero if $\ell(\al)<r$.
Such a strip can have at most $k$ rows of total size~$k$, which are below the diagonal in our English notation, and then at most $k$ rows of total size at most~$\al_i$. The first number is bounded by \ts $2^k$ and the second number is bounded by the number of weak compositions of $\al_i$ into $k$ parts, i.e.\  $\binom{\al_i + k-1}{k-1}$.

Overall, we have:
$$K_{\la/\mu,\al} \, \leq \, 2^{rk} \, \prod_{i=1}^r \. \binom{ \al_i + k-1}{k-1} \, \leq \, \frac{ 2^{rk} }{( (k-1)!)^r} \,
 \prod_{i=1}^r  \left[\. (\al_i+1)\cdots (\al_i+k-1) \right].
$$
Applying the AM--GM inequality as~\eqref{eq:binom_exp} to each product term at the end we get the bound $\left( \alpha_i + \frac{k}{2}\right)^r$. Another application of AM--GM gives
$$\prod_{i=1}^r \left( \alpha_i + \frac{k}{2} \right) \leq \left( \frac{ rk/2  + \sum_i \al_i }{r} \right)^r = \left(\frac{m}{r} +\frac{k}{2} \right)^{r(k-1)}$$
for the big product. Since \. $2^{k-2} \leq (k-1)!$, the first factor is bounded by \ts
$2^{2r}$. Putting it all together we obtain the result.
\end{proof}

\smallskip

\begin{lemma}\label{l:lr_bounds_k_rows}
Let \. $\la \vdash n$, \. $\mu \vdash n-m$ \. and \. $\nu \vdash m$, such that \. $\ell(\la)\ts \le \ts k$. Then:
$$c^{\la}_{\mu\nu} \, \leq \, \left(\frac{2\ts m}{k} \. + \. \frac{k+1}{3} \right)^{\binom{k}{2}}$$
\end{lemma}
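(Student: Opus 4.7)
The plan is to bound $c^\la_{\mu\nu}$ by a row-by-row parameterization of Littlewood--Richardson tableaux and then apply the same elementary inequalities used in Lemma~\ref{l:kostka_bound}, sharpening the exponent from $k(k-1)$ (which is what Lemma~\ref{l:kostka_bound} would give via $c^\la_{\mu\nu} \le K_{\la/\mu,\nu}$) to $\binom{k}{2}$ by exploiting the Yamanouchi condition.

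First I would realize $c^\la_{\mu\nu}$ as the number of LR tableaux $T$ of shape $\la/\mu$ with content $\nu$ (else $c^\la_{\mu\nu}=0$; note $\ell(\nu)\le\ell(\la)\le k$). For each $i \ge 0$, let $\pi^{(i)}$ be the composition of label counts in rows $1,\ldots,i$ of $T$. The Yamanouchi lattice-word condition ensures each $\pi^{(i)}$ is a partition, and the key combinatorial claim I would establish is that $\ell(\pi^{(i)}) \le i$; equivalently, no label exceeding $i$ appears in rows $1,\ldots,i$. This is proved by induction on $i$: if some label $j > i$ appeared in row $i$, scanning the reading word to the rightmost occurrence of $j$ in row $i$ would produce a prefix whose content has $\pi^{(i-1)}_{j-1} = 0$ (by the inductive hypothesis, since $j-1 \ge i$) while its $j$-part equals $1$, violating the partition property.

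Because $T$ is determined by the sequence $\varnothing = \pi^{(0)} \subseteq \pi^{(1)} \subseteq \cdots \subseteq \pi^{(k)} = \nu$ (each row $i$ is weakly increasing with multiset $\pi^{(i)}-\pi^{(i-1)}$), and since given $\pi^{(i-1)}$ the number of valid $\pi^{(i)}$ extending it by $r_i := \la_i - \mu_i$ cells distributed over at most $i$ rows is bounded by the number of weak compositions of $r_i$ into $i$ parts, we obtain
\[
c^\la_{\mu\nu} \, \le \, \prod_{i=1}^{k} \binom{r_i + i - 1}{i - 1},
\]
whose total exponent $\sum_{i=1}^{k}(i-1) = \binom{k}{2}$ matches the target. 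The rest is routine: applying \eqref{eq:binom_exp} gives $\binom{r_i+i-1}{i-1} \le (r_i + i/2)^{i-1}/(i-1)! \le (r_i + i/2)^{i-1}$, and weighted AM--GM with weights $(i-1)$ collapses the product into a single $\binom{k}{2}$-th power whose base is $\tfrac{1}{\binom{k}{2}}\sum_i (i-1)(r_i + i/2)$. Using $\sum_i (i-1) r_i \le (k-1)m$ and $\sum_i (i-1)i = (k-1)k(k+1)/3$, and dividing by $\binom{k}{2}=k(k-1)/2$, one obtains the two summands $\tfrac{2m}{k}$ and $\tfrac{k+1}{3}$.

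The main obstacle is the key combinatorial claim $\ell(\pi^{(i)})\le i$: for straight shape ($\mu = \varnothing$) it is immediate from the fact that entries in row $i$ are $\ge i$ (column-strictness plus row $1$ being all $1$'s), but in the skew setting entries in row $i$ can drop \emph{below} $i$ in columns where $\mu$ has cells above, so one must rely directly on the Yamanouchi condition to bound the labels from \emph{above} rather than on column-strictness from below.
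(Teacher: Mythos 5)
Your proposal is correct and follows essentially the same route as the paper: both count LR tableaux row by row using the fact that row $i$ contains only labels $\le i$, bound each row by weak compositions to get $\prod_{i}\binom{r_i+i-1}{i-1}$, and finish with AM--GM to collapse this into a single $\binom{k}{2}$-th power. The only difference is that you supply a proof (via the lattice-word condition) of the structural fact that row $i$ carries labels at most $i$, which the paper simply asserts as the defining description of LR tableaux.
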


\begin{proof}
Recall that \. $c^{\la}_{\mu\nu}$ \. is equal to the number of Littlewood--Richardson tableaux of shape
\ts $\la/\mu$ \ts and type \ts $\nu$. These are characterized by having only $1$'s in the first row,
only $1$'s and $2$'s in the second row, etc. Thus, we have:
$$c^{\la}_{\mu\nu} \ \leq \ \tbinom{\la_2 - \mu_2+1}{1} \. \tbinom{\la_3 - \mu_3+2}{2}\. \cdots \.\tbinom{\la_k - \mu_k+k-1}{k-1}\ts.$$
By the  AM--GM applied to each product term in the numerators of the binomial coefficients, the \RHS is bounded by
$$\frac{1}{ \prod_{i=1}^{k-1} i!} \left( \tfrac{ \sum_{i=2}^k (\la_i-\mu_i)(i-1) \. + \. \binom{k+1}{3}}{\binom{k}{2}}\right)^{\binom{k}{2}}
\ \leq  \ \left(\frac{2\ts m}{k} \. + \. \frac{k+1}{3} \right)^{\binom{k}{2}},$$
which completes the proof.
\end{proof}

\smallskip

\begin{rem}\label{r:LR-upper}{\rm
Lemma~\ref{l:lr_bounds_k_rows} is slightly sharper than the upper bound
in Theorem~4.14 of~\cite{PPY}.  Both upper bounds are the same asymptotically
when the length~$k$ is fixed and \. $n\to\infty$, and match
the lower bound in the same theorem.  The proof of the lemma
is concise and very different from that in~\cite{PPY},
so we included it for completeness.}
\end{rem}

\smallskip

\begin{lemma}\label{l:lr_bounds_k_durfee}
Let \ts $\la\vdash n$ \ts such that \ts $d(\la)\leq k$. Then for every \. $\mu, \nu \subseteq \la$ \.
with \. $|\mu|+|\nu|=n$, we have:
$$c^\la_{\mu\nu} \, \leq \, \left( \frac{n}{k} \. + \.k \right)^{2k^2}.$$
\end{lemma}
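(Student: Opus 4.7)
The plan is to decompose each LR tableau $T$ of shape $\la/\mu$ with content $\nu$ by separating cells according to whether their entries are at most $k$ or greater than $k$, and then to bound each piece using Lemma~\ref{l:kostka_bound}. The key observation is that in an LR tableau the entry in row $i$ is at most $i$, so entries greater than $k$ only appear in rows below row~$k$. Let $\la^{(k)}$ be $\mu$ together with all cells of $T$ of entry $\le k$; then $\mu \subseteq \la^{(k)} \subseteq \la$ and $\la^{(k)}$ agrees with $\la$ in its first $k$ rows. Since the sub-word of letters $\le k$ (respectively $>k$, shifted by $-k$) of a lattice word is again a lattice word, the restriction of $T$ to $\la^{(k)}/\mu$ is an LR tableau of content $\nu^H := (\nu_1,\ldots,\nu_k)$ and the restriction to $\la/\la^{(k)}$ (after shifting entries by $-k$) is an LR tableau of content $\nu^V := (\nu_{k+1},\nu_{k+2},\ldots)$. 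This injection gives
$$c^\la_{\mu\nu} \,\le\, \sum_{\la^{(k)}} c^{\la^{(k)}}_{\mu,\nu^H} \cdot c^\la_{\la^{(k)},\nu^V}.$$

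Each factor is then bounded using Lemma~\ref{l:kostka_bound}. For the first, $\la^{(k)} \subseteq \la$ gives $d(\la^{(k)}) \le k$, and combined with $\ell(\nu^H) \le k$ this yields $c^{\la^{(k)}}_{\mu,\nu^H} \le K_{\la^{(k)}/\mu,\nu^H} \le 2^{2k}(n/k+k/2)^{k(k-1)}$. For the second, $\nu \subseteq \la$ together with $d(\la) \le k$ forces $\nu^V_1 = \nu_{k+1} \le \la_{k+1} \le k$, so $\ell((\nu^V)') \le k$; after conjugating, $c^\la_{\la^{(k)},\nu^V} = c^{\la'}_{(\la^{(k)})',(\nu^V)'} \le 2^{2k}(n/k+k/2)^{k(k-1)}$ again by Lemma~\ref{l:kostka_bound}, using $d(\la') = d(\la) \le k$. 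Finally, since $\la^{(k)}$ agrees with $\la$ above row $k$, it is determined by the partition $(\la^{(k)}_{k+1},\la^{(k)}_{k+2},\ldots)$ contained in $(\la_{k+1},\la_{k+2},\ldots)$ with parts $\le k$, so the number of admissible $\la^{(k)}$ is at most $\binom{n+k}{k}$ by a standard lattice path count.

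Combining the three bounds gives
$$c^\la_{\mu\nu} \,\le\, \tbinom{n+k}{k}\cdot 2^{4k}\cdot (n/k+k/2)^{2k^2-2k},$$
and the elementary estimates $(n+k)^k \le k^k(n/k+k)^k$, $k^k/k! \le e^k$, and $(n/k+k/2) \le (n/k+k)$ absorb the prefactors into the additional $(n/k+k)^{2k}$ of slack provided by the target exponent $2k^2$. The main obstacle is this final bookkeeping: the naive combination produces exponent $2k^2 - 2k$ rather than $2k^2$, and the extra $2k$ powers of $(n/k+k)$ in the claim must swallow both the prefactor $2^{4k}$ from the two applications of Lemma~\ref{l:kostka_bound} and the count $\binom{n+k}{k}$ of intermediate shapes. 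This goes through cleanly for $n$ sufficiently large relative to $k^2$; the remaining small cases are handled separately by applying Lemma~\ref{l:lr_bounds_k_rows} directly when $\ell(\la) \le 2k$ or (by conjugation) when $\la_1 \le 2k$, which together with the splitting argument covers all partitions $\la$ with $d(\la) \le k$.
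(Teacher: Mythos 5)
Your decomposition of an LR tableau $T$ by entry size (cells with entries $\le k$ versus $>k$) is the combinatorial avatar of the paper's symmetric-function step $c^\la_{\mu\nu}=\langle s_\la, s_\mu s_\nu\rangle \le \langle s_\la, s_\mu\ts h_\al\ts e_\be\rangle$ with $\al = (\nu_1,\ldots,\nu_k)$ and $\be = (\nu_{k+1},\ldots)'$. Your $\nu^H,\nu^V$ are exactly the paper's $\al$ and $\be'$, the intermediate shape $\la^{(k)}$ plays the role of the paper's $\ga$ in $s_\mu\ts h_\al = \sum_\ga K_{\ga/\mu,\al}\ts s_\ga$, and bounding $c^{\la^{(k)}}_{\mu,\nu^H}$ by $K_{\la^{(k)}/\mu,\nu^H}$ and $c^\la_{\la^{(k)},\nu^V}$ by $K_{\la'/(\la^{(k)})',(\nu^V)'}$ recovers precisely the two Kostka factors that the paper controls via Lemma~\ref{l:kostka_bound}. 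So the two arguments are structurally the same; yours is phrased tableau-theoretically (and is in fact slightly tighter before the Kostka step, since $s_{\nu^H}s_{\nu^V}$ is a subsum of $h_\al e_\be$), while the paper works entirely in the ring of symmetric functions. Two bookkeeping differences: the paper applies AM--GM once more to replace the product $\big(\tfrac mk+\tfrac k2\big)\big(\tfrac{n-m}k+\tfrac k2\big)$ by $\tfrac14\big(\tfrac nk + k\big)^2$, gaining a factor $4^{-k(k-1)}$ that you discard, and it counts the intermediate shapes by $(n/2)^{2k}$ via~\eqref{eq:partitions_subset_bound} rather than by $\binom{n+k}{k}$.

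The genuine gap is in your final paragraph. You note (correctly) that the raw combination $\binom{n+k}{k}\cdot 2^{4k}\cdot(n/k+k/2)^{2k^2-2k}$ yields the target only when $n/k+k$ exceeds roughly $16e$, and you propose to cover the remaining range by applying Lemma~\ref{l:lr_bounds_k_rows} to $\la$ when $\ell(\la)\le 2k$, or to $\la'$ when $\la_1\le 2k$, asserting that this ``together with the splitting argument covers all partitions $\la$ with $d(\la)\le k$.'' That coverage claim is false: $d(\la)\le k$ does not force $\ell(\la)\le 2k$ or $\la_1\le 2k$. For instance $\la=(5,2,1^5)\vdash 12$ has $d(\la)=2$ but $\ell(\la)=7>4$ and $\la_1=5>4$, and $n=12$ lies well below the threshold $k(16e-k)\approx 80$ for $k=2$, so this $\la$ escapes all three of your cases. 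The cleaner route is to carry the $4^{-k(k-1)}$ gain from AM--GM through the computation and tighten the count of intermediate shapes, rather than discard them and attempt to patch the small-$n$ range afterwards.
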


\begin{proof}

Let \. $\nu = \alpha \cup \beta'$, where \. $\alpha = (\nu_1,\ldots,\nu_k)$ \. and \. $\beta'=(\nu_{k+1},\ldots)$.
Since \ts $d(\nu)\le d(\la) \leq k$, we have \ts $\ell(\be)\le k$.
Then we have:
$$
c^{\la}_{\mu\nu} \, = \, \langle s_\la, s_\mu \ts s_\nu\rangle \, \leq \, \langle s_\la , s_\mu \ts h_\alpha \ts e_\beta \rangle\ts.
$$
By definition, we have
$$s_\mu \. h_\al \, = \, \sum_{\ga} \. K_{\ga/\mu,\al} \, s_\ga
$$
Multiplying by \ts $e_\be$, expanding in the Schur basis and extracting the \ts $s_\la$ \ts term, we get:
$$
\langle s_\la, s_\mu \ts h_\al \ts e_\be \rangle \, = \, \sum_{\ga} \. K_{\ga/\mu,\al} \. K_{\la'/\ga',\be}\..
$$
Applying the bounds from Lemma~\ref{l:kostka_bound} and using the fact that \. $\ell(\al), \ell(\be)\leq k$, we obtain
$$
c^{\la}_{\mu\nu} \, \leq \, \sum_{\ga\vdash m} \. 2^{4k} \. \left( \frac{m}{k} \. + \. \frac{k}{2}\right)^{k(k-1)}\left( \frac{n-m}{k} \. +\. \frac{k}{2}\right)^{k(k-1)}, \quad \text{where \ \ $m \. := \. |\al|$.}
$$

Observe that the number of partitions \ts $\ga \subset \la$ \ts is bounded above by \. $(n/2)^{2k}$ from inequality~\eqref{eq:partitions_subset_bound}.
Note also that, by the AM--GM inequality, we have:
$$
\left( \frac{m}{k} \. + \. \frac{k}{2}\right)\left( \frac{n-m}{k} \.+\.\frac{k}{2}\right) \ \leq \ \frac14 \left(\frac{n}{k} \. + \. k\right)^2.
$$
Putting everything together, after some cancellations we obtain:
$$c^\la_{\mu\nu} \ \leq \ (n/2)^{2k} \, \frac{2^{4k}}{4^k} \. \left(\frac{n}{k}+k\right)^{2k(k-1)} \ \leq \
\left(\frac{n}{k} +k \right)^{2k^2},
$$
as desired.
\end{proof}

\bigskip

\section{Bounds on Kronecker coefficients}\label{s:Kron}

In this section we prove Theorem~\ref{t:main} and Theorem~\ref{t:intro-rows-lower}, main results of the paper.

\smallskip

\subsection{Upper bounds}\label{ss:Kron-upper}
We approach the problem gradually, starting from the case where two partitions
have bounded number of rows (Lemma~\ref{l:kron_one_durfee}),
then just one (Lemma~\ref{l:kron_two_durfee}), and then in full generality.

\smallskip

Before we proceed with those, we need to add a Lemma complementing the results in~\cite{PP_ct}.

\begin{lemma}\label{l:kron_transp}
Let $\la,\mu,\nu \vdash n$ and $\ell(\la)\leq a$, $\ell(\mu)\leq b$, $\ell(\nu) \leq c$. Then
$g(\la,\mu,\nu') \leq 2^{abc}$.
\end{lemma}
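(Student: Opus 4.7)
The plan is to derive a compact generating-function identity which encodes $g(\la,\mu,\nu')$ for all triples with bounded row lengths, and then read off the bound by setting every variable equal to~$1$.

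First I would start from the \emph{dual Cauchy identity}
$$
\sum_\la s_\la(\mathbf{u})\, s_{\la'}(\mathbf{v}) \, = \, \prod_{\alpha,\beta}\bigl(1+u_\alpha v_\beta\bigr),
$$
and perform the substitution $\mathbf{u} \. = \. \{x_iy_j : 1\le i\le b,\ 1\le j\le c\}$ and $\mathbf{v}=(z_1,\ldots,z_a)$. The left-hand side becomes $\sum_\la s_\la[xy]\,s_{\la'}(z)$ and the right-hand side is the polynomial $\prod_{i,j,k}(1+x_iy_jz_k)$ in $abc$ variables. Expanding $s_\la[xy]$ via the Kronecker--Cauchy identity~\eqref{eq:schur_kron}, relabeling $\la\leftrightarrow\la'$, and applying the conjugation symmetry $g(\la',\mu,\nu)=g(\la,\mu,\nu')$ implied by~\eqref{eq:sym-mono}, one obtains
$$
\sum_{\la,\mu,\nu}\, g(\la,\mu,\nu')\,s_\la(z)\,s_\mu(x)\,s_\nu(y) \, = \, \prod_{i,j,k}\bigl(1+x_iy_jz_k\bigr).
$$

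Next I would specialize $x_i=y_j=z_k=1$ for all indices. The right-hand side has $abc$ factors each equal to~$2$, evaluating to $2^{abc}$. On the left, $s_\tau(1^r)$ vanishes when $\ell(\tau)>r$ and is a positive integer for $\ell(\tau)\le r$, so every term in the sum is a nonneg integer. Consequently, for any fixed $\la,\mu,\nu$ satisfying $\ell(\la)\le a$, $\ell(\mu)\le b$, $\ell(\nu)\le c$, the corresponding single summand is dominated by the entire sum:
$$
g(\la,\mu,\nu') \, \le \, g(\la,\mu,\nu')\,s_\la(1^a)\, s_\mu(1^b)\, s_\nu(1^c) \, \le \, 2^{abc},
$$
which is exactly the lemma.

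The only step requiring care is the combined bookkeeping of the dual Cauchy substitution, the plethystic expansion of $s_\la[xy]$, and the conjugation symmetry of~$g$; beyond that, the argument is a short chain of standard identities and I do not anticipate any real obstacle.
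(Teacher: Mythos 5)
Your proof is correct and follows essentially the same approach as the paper's. The paper obtains the key identity $\sum_{\la,\mu,\nu} g(\la,\mu,\nu')\,s_\la(x)\,s_\mu(y)\,s_\nu(z) = \prod_{i,j,k}(1+x_iy_jz_k)$ by applying the involution $\omega$ in the $z$-variables directly to the triple Cauchy identity~\eqref{eq:cauchy3}, and then extracts the coefficient of $x^\la y^\mu z^\nu$ on both sides, interpreting the \RHS as counting $a\times b\times c$ binary arrays, hence at most $2^{abc}$. You instead derive the same identity from the dual Cauchy identity composed with~\eqref{eq:schur_kron} and the conjugation symmetry, and then specialize $x=y=z=(1,\ldots,1)$, using $s_\tau(1^r)\ge 1$ for $\ell(\tau)\le r$ to bound the single term of interest by the full sum. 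Both routes rest on the same nonnegativity argument; the specialization you use is a clean alternative to the paper's coefficient extraction, and neither step is any harder than the other.
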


\begin{proof}
From the triple Cauchy identity~\eqref{eq:cauchy3}, applying the involution $\omega$ on the symmetric functions in $z$ we get that $\omega(s_\nu(z)) =s_{\nu'}(z)$ on the \LHS, so
$$\sum_{\la,\mu,\nu} g(\la,\mu,\nu') s_\la(x) s_\mu(y) s_\nu(z) = \prod_{i,j,k} (1+x_iy_jz_k).$$
Expanding both sides in monomials in $x,y,z$ and taking the coefficient at $x^\la y^\mu z^\nu$ on both sides we get
$$g(\la,\mu,\nu') \leq [x^\la y^\mu z^\nu] \prod_{i,j,k=1}^{a,b,c} (1+x_iy_jz_k)$$
as only the variables $x_1,\ldots,x_a, y_1,\ldots, y_b,z_1,\ldots,z_c$ can appear. The \RHS is the generating function of $a \times b \times c$ binary arrays $B$, where for each coordinate $(i,j,k)$ we have a multiplicative weight $(x_iy_jz_k)^{B_{ijk}}$.
Thus, the coefficient on the \RHS above is bounded by the total number of such binary arrays, which is $2^{abc}$. \end{proof}

\begin{lemma}\label{l:kron_one_durfee}
Let \. $\la,\mu,\nu\vdash n$, such that \. $\ell(\la), \ell(\mu) \leq k$ \. and \. $d(\nu) \leq k$.
Then
$$g(\la,\mu,\nu) \, \leq \, 2^{k^3}  n^{k^3+k^2+3k}.$$
\end{lemma}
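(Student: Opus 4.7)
The plan is to exploit the Durfee-square decomposition of $\nu$ together with Littlewood's identity~\eqref{eq:littlewood} in order to reduce the bound on $g(\la,\mu,\nu)$ to a finite sum of products of two Littlewood--Richardson coefficients (for $\la$ and $\mu$) and two Kronecker coefficients (involving the pieces of $\nu$), each of which is controlled by the tools already built.

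First, since $d(\nu) \le k$, I would set $\alpha := (\nu_1,\ldots,\nu_k)$ and $\beta := (\nu_{k+1},\nu_{k+2},\ldots)'$, so that $\ell(\alpha) \le k$, $\ell(\beta) \le k$ (the latter because $\nu_{k+1} \le k$), and $\nu = \alpha \cup \beta'$. The Littlewood--Richardson coefficient $c^{\nu}_{\alpha,\beta'}$ equals $1$ (via the obvious disjoint-union LR filling), so
$$g(\la,\mu,\nu)\ \le\ \sum_{\rho} c^{\rho}_{\alpha,\beta'}\, g(\la,\mu,\rho)\ =\ \bigl\langle\, s_\la * (s_\alpha\, s_{\beta'}),\ s_\mu\,\bigr\rangle.$$
Now apply Littlewood's identity~\eqref{eq:littlewood} to $s_\la * (s_\alpha s_{\beta'})$, expand each Kronecker product $s_\alpha * s_\theta$ and $s_{\beta'} * s_\eta$ in the Schur basis, and use $\langle s_\pi\, s_\sigma,\, s_\mu\rangle = c^{\mu}_{\pi,\sigma}$ to obtain
$$g(\la,\mu,\nu)\ \le\ \sum_{\theta,\eta,\pi,\sigma} c^{\la}_{\theta,\eta}\, c^{\mu}_{\pi,\sigma}\, g(\pi,\alpha,\theta)\, g(\sigma,\beta',\eta),$$
where $\theta,\pi \vdash m := |\alpha|$ and $\eta,\sigma \vdash n-m$.

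Each factor in the sum is then controlled by an earlier result. The Schur positivity constraints $c^{\la}_{\theta,\eta}>0$ and $c^{\mu}_{\pi,\sigma}>0$ combined with $\ell(\la),\ell(\mu)\le k$ force $\ell(\theta),\ell(\eta),\ell(\pi),\ell(\sigma) \le k$ in every contributing term. Lemma~\ref{l:lr_bounds_k_rows} then bounds $c^{\la}_{\theta,\eta}$ and $c^{\mu}_{\pi,\sigma}$ by roughly $n^{\binom{k}{2}}$ each. Corollary~\ref{c:rows} bounds $g(\pi,\alpha,\theta) \le n^{k^3}$, since all three partitions have length at most $k$. Finally, Lemma~\ref{l:kron_transp} bounds $g(\sigma,\beta',\eta) \le 2^{k^3}$, since $\sigma$, $\eta$, and the unconjugated $\beta$ each have length at most $k$. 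Counting the tuples $(\theta,\eta,\pi,\sigma)$ via~\eqref{eq:partitions_general_bound} contributes an $O(n^{4(k-1)})$ factor, and multiplying everything together yields a bound of the form $2^{k^3}\, n^{k^3 + O(k^2)}$; careful bookkeeping of the constants then gives the claimed $2^{k^3}\, n^{k^3 + k^2 + 3k}$.

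The hard step will be controlling $g(\sigma,\beta',\eta)$, since $\beta'$ can have length as large as $n-m$, so that neither Corollary~\ref{c:rows} nor Theorem~\ref{t:PP_ct_main} gives a useful bound at this place. This is exactly the role of the new Lemma~\ref{l:kron_transp}: it absorbs the long conjugated partition into a bounded constant $2^{k^3}$, using only that the other two partitions (and the unconjugated $\beta$) are short. Without this input the Littlewood-identity expansion would have no handle on its second Kronecker factor and the whole argument would stall.
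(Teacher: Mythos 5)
Your proposal matches the paper's proof essentially step for step: the same decomposition $\nu=\alpha\cup\beta'$, the same reduction $g(\la,\mu,\nu)\le\langle s_\la*(s_\alpha s_{\beta'}),s_\mu\rangle$, the same use of Littlewood's identity, and the same four bounds (Lemma~\ref{l:lr_bounds_k_rows}, Corollary~\ref{c:rows}, Lemma~\ref{l:kron_transp}, and a partition count). The only minor difference is that the paper also invokes Regev's bound to remark that $\ell(\beta')\le k^2$ in nonvanishing terms, a fact that is not actually needed since Lemma~\ref{l:kron_transp} only requires $\ell(\beta)\le k$, so your streamlined appeal to that lemma is perfectly correct.
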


\begin{proof}
Let \. $\nu = \alpha \cup \beta'$, where \. $\alpha = (\nu_1,\ldots,\nu_k)$ \. and \. $\beta'=(\nu_{k+1},\ldots)$.
Again, since \ts $d(\nu)\le d(\la) = k$, we have \ts $\ell(\be)\le k$.
Let \ts $m=|\al|$, so \ts $n-m=|\be|$

Since \. $s_\al s_\be = s_\nu + \cdots$ \, is a Schur positive sum containing \ts $s_\nu$\ts, we have:
\begin{equation}\label{eq:g1}
g(\la,\mu,\nu) \, = \, \langle s_\la * s_\nu, s_\nu \rangle \, \leq \, \big\langle s_\la *(s_\al s_{\be'}), s_\mu \big\rangle.
\end{equation}
Applying Littlewood's identity~\eqref{eq:littlewood}, we get:
\begin{align}\label{eq:g2}
\big\langle s_\la *(s_\al s_{\be'}), s_\mu \big\rangle \ = \ \sum_{\theta, \eta, \xi, \ga} \. c^{\la}_{\theta  \eta} \. c^{\mu}_{\gamma \xi} \. g(\theta, \al, \gamma) \. g(\eta, \be', \xi)\..
\end{align}
Since \. $c^{\mu}_{\ga\xi}>0$ \. only if \. $\ga,\xi \subset \mu$, and similarly \.
$c^{\la}_{\theta,\eta}>0$ \. only if \. $\theta, \eta \subset \la$,
it follows that all partitions in the \RHS above have length at most~$k$.

We now apply previous results to estimate the \RHS of~\eqref{eq:g2}.
By Corollary~\ref{c:rows}, we have:
$$g(\theta,\al,\gamma)\, \leq \, m^{k^3}.
$$
For the term \. $g(\eta,\be',\xi)$ , note that we  have \. $\ell(\eta), \ell(\xi) \leq k$.  Furthermore,  if \. $g(\eta,\be',\xi)\ne 0$, then by~\cite{Reg}, we must have \.  $\ell(\be') \leq k^2$. Since we also have \. $\ell(\be) = \nu_{k+1} \leq k$, we can apply Lemma~\ref{l:kron_transp} and see that $g(\eta,\beta',\xi) \leq 2^{k^3}$.

Applying Lemma~\ref{l:lr_bounds_k_rows}, we also have upper bounds for the Littlewood--Richardson coefficients involved.
Indeed, denote by \. $r:=\min\{n-m,m\}$. Then for the Littlewood--Richardson coefficients in ~\eqref{eq:g2}, we have:
$$c^{\la}_{\theta  \eta}\., \, c^{\mu}_{\gamma \xi} \
\le \ \left(\frac{2r}{k} \. + \. \frac{k+1}{3}\right)^{\binom{k}{2}}.
$$

Therefore, equations~\eqref{eq:g1} and~\eqref{eq:g2} give
\begin{align*}
g(\la,\mu,\nu) \ \leq \ \sum_{\theta, \eta, \xi, \ga} \. \left(\frac{2r}{k} \. + \. \frac{k+1}{3}\right)^{2\binom{k}{2}} \. 2^{k^3} \. m^{k^3}
\end{align*}
The sum above is over \. $\theta, \eta \subset \la$ \. of sizes \ts $m$, \ts $n-m$,
and over \. $\xi,\ga \subset \mu$ \. of sizes $m$, \ts $n-m$, respectively.
The number of such pairs can be bound  following~\eqref{eq:partitions_general_bound} by $\binom{m+k-1}{k-1}\binom{n-m+k-1}{k-1} \leq_{\eqref{eq:binom_concave}} \binom{n/2+k-1}{k-1}^2$ \. will suffice. We conclude:
\begin{align*}
g(\la,\mu,\nu) \ \leq \ \binom{n/2+k-1}{k-1}^4 \left(\frac{2r}{k} \. + \. \frac{k+1}{3}\right)^{2\binom{k}{2}} 2^{k^3} m^{k^3} \
\leq \, 2^{k^3} n^{k^3+k^2+3k} \ \leq \ C_k  \. n^{(k+1)^3},
\end{align*}
as desired.
\end{proof}

\smallskip

\begin{lemma}\label{l:kron_two_durfee}
Let \. $\la,\mu,\nu \vdash n$, such that \. $d(\mu), \ts d(\nu) \leq k$ \. and \. $\ell(\la) \leq k$. Then
$$g(\la,\mu,\nu) \, \leq \, \frac{1}{k^{2k^2}} \,\ts n^{2k^3 \ts +\. \frac{9}{2} k^2 \ts + \. \frac{19}{2} k}\,.
$$
\end{lemma}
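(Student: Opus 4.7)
The proof will parallel that of Lemma~\ref{l:kron_one_durfee}, applied twice. The plan is to decompose $\nu$ using its Durfee square bound, apply Littlewood's identity to express $g(\la,\mu,\nu)$ as a sum of products of smaller Kronecker coefficients, and then invoke Lemma~\ref{l:kron_one_durfee} itself to bound each such factor.

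Concretely, write $\nu = \alpha \cup \beta'$ with $\alpha = (\nu_1, \ldots, \nu_k)$ and $\beta' = (\nu_{k+1}, \nu_{k+2}, \ldots)$. Since $d(\nu) \le k$, we have $\ell(\alpha) \le k$ and $\ell(\beta) = \nu_{k+1} \le k$. The product $s_\alpha \ts s_{\beta'}$ is a Schur-positive expansion containing $s_\nu$, so by symmetry of the Kronecker coefficient,
$$
g(\la,\mu,\nu) \, = \, \langle s_\mu * s_\nu, s_\la\rangle \, \leq \, \langle s_\mu * (s_\alpha \ts s_{\beta'}), \ts s_\la \rangle.
$$
Expanding via Littlewood's identity~\eqref{eq:littlewood} and extracting the coefficient of $s_\la$ yields
$$
g(\la,\mu,\nu) \, \leq \, \sum_{\theta,\eta,\gamma,\xi} \. c^\mu_{\theta\eta} \ts c^\la_{\gamma\xi} \ts g(\alpha, \theta, \gamma) \ts g(\beta', \eta, \xi)\ts,
$$
where the sum ranges over $\theta, \eta \subseteq \mu$ and $\gamma, \xi \subseteq \la$ with $|\theta|+|\eta| = |\gamma|+|\xi| = n$. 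Consequently $d(\theta), d(\eta) \le d(\mu) \le k$ and $\ell(\gamma), \ell(\xi) \le \ell(\la) \le k$.

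Next I would bound each factor. The Littlewood--Richardson coefficients $c^\mu_{\theta\eta}$ and $c^\la_{\gamma\xi}$ are handled by Lemmas~\ref{l:lr_bounds_k_durfee} and~\ref{l:lr_bounds_k_rows} respectively. The Kronecker factor $g(\alpha,\theta,\gamma)$ has $\ell(\alpha), \ell(\gamma) \le k$ and $d(\theta) \le k$, so Lemma~\ref{l:kron_one_durfee} applies directly. For $g(\beta',\eta,\xi)$, the key trick is to conjugate the first two arguments:
$$g(\beta',\eta,\xi) \, = \, g(\beta, \eta', \xi),$$
which places us back in the setting of Lemma~\ref{l:kron_one_durfee}, since $\ell(\beta), \ell(\xi) \le k$ and $d(\eta') = d(\eta) \le k$. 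This conjugation identity is the new structural idea needed beyond the proof of the previous lemma.

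Finally, the number of summands $(\theta,\eta)$ is bounded using~\eqref{eq:partitions_subset_bound} applied to sub-partitions of $\mu$, yielding at most $(n/2)^{4k}$ pairs; the number of pairs $(\gamma,\xi)$ is bounded via~\eqref{eq:partitions_general_bound} together with log-concavity~\eqref{eq:binom_concave}. Multiplying together the four pointwise bounds with the two enumeration estimates and simplifying via AM--GM from Section~\ref{s:Basic} delivers the claimed inequality. The main technical obstacle is the bookkeeping: two applications of Lemma~\ref{l:kron_one_durfee} account for the leading exponent $2k^3$, but the contribution $(n/k+k)^{2k^2}$ from Lemma~\ref{l:lr_bounds_k_durfee} and the enumeration factor $(n/2)^{4k}$ must be absorbed carefully so as to produce the prefactor $k^{-2k^2}$ and the precise final exponent $2k^3 + \tfrac{9}{2}k^2 + \tfrac{19}{2}k$ with no loss.
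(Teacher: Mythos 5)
Your proposal is correct and follows essentially the same route as the paper: decompose one of the Durfee-bounded partitions as $\alpha\cup\beta'$, apply Littlewood's identity, bound the resulting LR coefficients via Lemmas~\ref{l:lr_bounds_k_rows} and~\ref{l:lr_bounds_k_durfee}, bound the two Kronecker factors via Lemma~\ref{l:kron_one_durfee} (using the conjugation identity $g(\beta',\eta,\xi)=g(\beta,\eta',\xi)$ for the second), and count summands with \eqref{eq:partitions_subset_bound} and \eqref{eq:partitions_general_bound}. The only cosmetic difference is that you decompose $\nu$ and pair it against $s_\mu$, whereas the paper decomposes $\mu$ and pairs against $s_\la$; since both $\mu$ and $\nu$ carry the Durfee bound and the Kronecker coefficient is symmetric, the two choices yield the identical chain of estimates.
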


\begin{proof}
As before, let \. $\mu = \alpha \cup \beta'$, where \. $\alpha=(\mu_1,\ldots,\mu_k)$ \. and \.
$\beta = (\mu_{k+1},\ldots)'$. Let \. $m=|\al|$, \. $n-m=|\be|$, and let \. $r:=\min\{m,n-m\}$.
Since \. $s_\al s_\be = s_\mu + \cdots$ \. is a Schur positive sum containing \. $s_\mu$\ts,
we have:
\begin{equation}\label{eq:eq-Kron1}
g(\la,\mu,\nu) \, = \, \langle s_\la * s_\mu, s_\nu \rangle \, \leq \, \big\langle s_\la *(s_\al \ts s_{\be'}), s_\nu \big\rangle.
\end{equation}
Applying Littlewood's identity~\eqref{eq:littlewood}, we get
\begin{equation}\label{eq:eq-Kron2}
\big\langle s_\la *(s_\al \ts s_{\be'}), s_\nu \big\rangle \ = \
\sum_{\theta, \eta,\ga,\xi} \. c^{\la}_{\theta \eta} \. c^{\nu}_{\gamma\xi} \. g(\theta, \al, \gamma) \. g(\eta, \be', \xi)\ts.
\end{equation}

We will bound the terms in the \RHS of~\eqref{eq:eq-Kron2}.
For partitions \. $\theta, \eta$ \. such that \. $c^{\la}_{\theta,\eta}>0$ \.
we must have \. $\theta,\eta \subset \la$, and so \. $\ell(\theta),\ell(\eta) \leq k$.
By Lemma~\ref{l:lr_bounds_k_rows}, we thus have:
$$c^{\la}_{\theta\eta} \, \leq \, \left(\frac{2r}k \ts + \ts \frac{k+1}3\right)^{\binom{k}{2}}.
$$
On the other hand, since we only select partitions \. $\gamma, \xi$, for which \. $c^{\nu}_{\ga\xi}>0$,
then we must have \. $\ga, \xi \subset \nu$, and so \. $d(\ga), d(\xi) \le k$.
By Lemma~\ref{l:lr_bounds_k_durfee}, we thus have:
$$
c^{\nu}_{\ga\xi} \, \leq \, \left(\frac{n}k \ts + \ts k\right)^{2k^2}.
$$

For the Kronecker coefficients in the summation, by Lemma~\ref{l:kron_one_durfee}, we have:
$$g(\theta,\al,\ga) \, \leq \, 2^{k^3} m^{k^3+k^2+3k}.
$$
Similarly, we have:
$$g(\eta,\be',\xi) \, = \, g(\eta,\be,\xi') \, \leq  \, 2^{k^3} (n-m)^{k^3+k^2+3k}.
$$

Now, the summation in the \RHS of~\eqref{eq:eq-Kron2}, we bound the number of pairs of partitions \. $\theta, \eta$ following inequalities~\eqref{eq:partitions_length_bound}\. and~\eqref{eq:binom_concave}
by $\binom{n/2+k-1}{k-1}^2$, and the number of partitions \. $\ga,\xi$ \. by \. $(n/2)^{2k}$ from~\eqref{eq:partitions_subset_bound}.

Combining \eqref{eq:eq-Kron1}, \eqref{eq:eq-Kron2} and the upper bounds above, we conclude:
\begin{align*}
g(\la,\mu,\nu) \ & \leq \ \binom{n/2+k-1}{k-1}^2 \left(\frac{n}{2}\right)^{4k} 2^{2k^3} \left(\frac{n}{k} \ts + \ts k\right)^{2k^2} \left(\frac{2r}k \ts + \ts \frac{k+1}3\right)^{\binom{k}{2}}m^{k^3+k^2+3k} (n-m)^{k^3+k^2+3k}\\
& \leq \ \frac{1}{k^{2k^2}} \, n^{2k^3 \ts + \ts 2k^2 \ts + \ts  6k \ts + \ts  6k \ts + \ts  2k^2 \ts + \ts  \binom{k}{2}} \ = \ \frac{1}{k^{2k^2}} \, n^{2k^3\ts + \ts \frac{9}{2}k^2\ts + \ts \frac{23}{2}k},
\end{align*}
where the constant factors involving \ts $k$ \ts are altogether bounded by \. $k^{-2k^2}$.
\end{proof}

\smallskip

\begin{proof}[Proof of Theorem~\ref{t:main}]
We use the same setup as in the proofs of Lemma~\ref{l:kron_two_durfee}, where $\mu = \al \cup \be'$ and $m=|\al|$. We have:
\begin{align*}
g(\la,\mu,\nu) \ \leq \ \big\langle s_\la *(s_\al s_{\be'}), s_\nu \big\rangle \ = \
\sum_{\theta, \eta,\ga,\xi} \, c^{\la}_{\theta \eta} \, c^{\nu}_{\gamma \xi} \,\. g(\theta, \al, \gamma) \, g(\eta, \be', \xi).
\end{align*}
Again, we must have \. $d(\theta)$, \ts $d(\eta)$, \ts $d(\gamma)$, \ts  $d(\xi)\ts \le \ts k$.  Thus,
we can apply the upper bounds on the Kronecker coefficients from Lemma~\ref{l:kron_two_durfee}, and on the Littlewood--Richardson coefficients
from Lemma~\ref{l:lr_bounds_k_durfee}.  Bounding the number of partitions $\theta, \eta, \gamma,\xi$ from~\eqref{eq:partitions_subset_bound} by $(n/2)^{2k}$, we obtain
\begin{align*}
g(\la,\mu,\nu) \ & \leq \ \sum_{\theta, \eta,\ga,\xi} \.\left(\frac{n}{k} \ts + \ts k\right)^{4k^2}\frac{1}{k^{4k^2}}
\, m^{2k^3+\frac92 k^2 \ts + \ts\frac{19}{2}k} \. (n-m)^{2k^3\ts + \ts\frac92 k^2 \ts + \ts \frac{23}{2}k}\\
& \leq \ \left( \frac{n}{2} \right)^{8k} \frac{1}{k^{4k^2}} \left(\frac{n}{k}+k\right)^{4k^2} m^{2k^3+\frac92 k^2 + \frac{23}{2}k} (n-m)^{2k^3 \ts + \ts \frac92 k^2  \ts + \ts  \frac{23}{2}k} \\
& \leq  \ \frac{1}{k^{8k^2}}\left( \frac{n}{2} \right)^{8k} \left( \frac{n}{2} \right)^{4k^3 \ts + \ts 9k^2 \ts + \ts 23k}  n^{4k^2} \ = \ \frac{1}{k^{8k^2}\ts 2^{8k^3}} \, n^{4k^3 \ts + \ts 13k^2 \ts + \ts 31k},
\end{align*}
which completes the proof.
\end{proof}

\smallskip

\subsection{Proof of Theorem~\ref{t:intro-rows-lower}} \label{ss:Kron-lower}
Let $n=ak$. Combining~\eqref{eq:h-Kostka} and \eqref{eq:schur_kron}, we have the following identity:
$$
h_{a^k}[xy] \ = \ \sum_{\la\vdash n, \. \ell(\la)\leq k} \. K_{\la,a^k} \. s_{\la}[xy] \ = \
\sum_{\la,\mu,\nu\vdash n} \, K_{\la,a^k} \. g(\la,\mu,\nu) \. s_\mu(x) \. s_\nu(y)\ts.
$$
Let \. $x=(x_1,\ldots,x_k)$ \. and \. $y=(y_1,\ldots,y_k)$, so all partitions
in the above identity have lengths bounded by~$k$. Compare the coefficients at \.
$m_{a^k}(x)\cdot m_{a^k}(y)$ \. on both sides, where \ts $m_\al$ \ts are
\defng{monomial symmetric functions}. We then have:
\begin{align}\label{eq:h_g_ct}
[x_1^a\ldots x_k^a \. y_1^a \ldots y_k^a] \ts h_{a^k}[xy] \ = \
\sum_{\la,\mu,\nu \vdash n} \, g(\la,\mu,\nu) \. K_{\la,a^k} \. K_{\mu,a^k} \. K_{\nu, a^k}\..
\end{align}
Consider the term on the \LHS. We have that
$$
h_a[xy] \ = \ \sum_{M} \, \prod_{i,j} \. (x_i y_j)^{M_{ij}} \ = \ \sum_M \. x^{row(M)} \. y^{col(M)}
$$
is the generating function for \defng{contingency tables} \ts $M=\big(M_{ij}\big)$ \ts
with respect to their row and column sums.
Since \. $h_{a^k} = (h_a)^k$, we conclude that the coefficients at \.
$x_1^a\cdots x_k^a \. y_1^a\cdots y_k^a$ \. are equal to the number of
\defng{3-dim contingency arrays} \ts $A$ \ts with all \defng{2-dim marginals} equal to~$a$.
We refer to \cite{PP_ct} for precise definitions and further details.

Geometrically, these contingency arrays~$A$ are integer points in a three-way
\defng{transportation polytope} \. $T_k(m)\ssu \rr^{k^3}$ \. such that \.
$\dim T_k(a) \ts = \ts (k^3 -3k)$.  By the Ehrhart theory for rational
polytopes (see e.g.~\cite[$\S$3.7]{BR}), the number of such points is given by
a quasipolynomial in~$a$ of degree \ts $(k^3-3k)$.  Thus there exists a constant \ts $G_k>0$
(see also~$\S$\ref{ss:finrem-barv}),
such that
\begin{align}\label{eq:ct_bound}
\big[x_1^a\ldots x_k^a \ts y_1^a \ldots y_k^a\big] \ts h_{a^k}[xy]  \, \geq \, G_k \ts a^{k^3-3k}.
\end{align}

On the other hand, in~\eqref{eq:h_g_ct} we have \. $K_{\la,a^k} \leq a^{k^2-k}$ \.
by Lemma~\ref{l:kostka_bound}, and a similar bound for the other Kostka numbers.  We conclude:

\begin{equation}\label{eq:sum_g_bound}
\aligned
& \sum_{\la,\mu,\nu \ts \in \ts \cP_k(n)} \. g(\la,\mu,\nu) \. K_{\la,a^k} \. K_{\mu,a^k} \. K_{\nu, a^k} \ \leq  \
a^{3k^2-3k} \. \sum_{\la,\mu,\nu \ts \in \ts  \cP_k(n)} \. g(\la,\mu,\nu) \\
&\hskip1.2cm \leq \ \big|\cP_k(n)\big|^3 \. a^{3k^2-3k} \. \max_{\la,\mu,\nu \ts \in \ts  \cP_k(n)} \ts g(\la,\mu,\nu) \ \leq \ a^{3k^2-3} \. \max_{\la,\mu,\nu \ts \in \ts  \cP_k(n)} \ts g(\la,\mu,\nu),
\endaligned
\end{equation}
where \. $\cP_k(n)=\bigl\{\la\vdash n \. :\. \ell(\la)\le k\bigr\}$,
so that \. $|\cP_k(n)|=O\big(n^{k-1}\big)$.  Comparing the inequalities
from~\eqref{eq:ct_bound} and~\eqref{eq:sum_g_bound}, we obtain
$$\max_{\la,\mu,\nu \ts \in \ts  \cP_k(n)} \. g(\la,\mu,\nu) \,
\geq \, G_k \. a^{k^3 \ts - \ts 3k \ts - \ts 3k^2 \ts + \ts 3},
$$
as desired. \qed

\bigskip

\section{Kronecker bounds via the monotonicity property}\label{s:mono}

\subsection{Bounds for the symmetric Kronecker problem}  \label{ss:mono-sym}
For all \ts $n, k\ge 1$, define
$$
\aligned
\rAs(n,k) \, &:= \, \max \. \big\{ \ts g(\la,\la,\la) \ : \ \la\vdash n, \ \ell(\la) \le k \ts \big\}, \quad \text{and} \\
\rBfs(n,k) \, & := \, \max \. \big\{ \ts g(\la,\la,\la) \ : \ \la\vdash n, \ \la=\la',  \ d(\la) \le k \ts \big\}.
\endaligned
$$
Clearly, \. $\rAs(n,k)\le \rKs(n)$ \. and \. $\rBfs(n,k) \le \rKfs(n)$.

\smallskip

\begin{lemma}\label{l:Kron-sym}
For all $n\ge 1$, we have:
\begin{equation}\label{eq:Kron-sym}
\rKs(3n) \, \ge \, \rK(n) \quad \text{and} \quad \rAs(3n,k)\ge \rA(n,k)\ts.
\end{equation}
\end{lemma}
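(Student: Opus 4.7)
The plan is to derive both inequalities from a single application of the monotonicity property, iterated twice, combined with the symmetry of Kronecker coefficients. Given any triple $\lambda, \mu, \nu \vdash n$ with $g(\lambda, \mu, \nu) > 0$, I would set $\tau := \lambda + \mu + \nu$, so $|\tau| = 3n$. The key observation is that componentwise addition of partitions is commutative, so the three cyclic rearrangements $\lambda + \mu + \nu$, $\mu + \nu + \lambda$ and $\nu + \lambda + \mu$ all equal $\tau$. Moreover, $\ell(\tau) = \max\{\ell(\lambda), \ell(\mu), \ell(\nu)\}$. The goal reduces to showing $g(\tau,\tau,\tau) \ge g(\lambda,\mu,\nu)$.

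By the symmetry property~\eqref{eq:sym-mono}, the assumption $g(\lambda,\mu,\nu) > 0$ implies $g(\mu,\nu,\lambda) > 0$ and $g(\nu,\lambda,\mu) > 0$. I would first apply Theorem~\ref{t:manivel} to $(\lambda,\mu,\nu)$ with increment $(\alpha,\beta,\gamma) = (\mu,\nu,\lambda)$, obtaining
$$g(\lambda+\mu,\, \mu+\nu,\, \nu+\lambda) \, \ge \, g(\lambda,\mu,\nu).$$
Then I would apply it once more to the resulting triple $(\lambda+\mu,\, \mu+\nu,\, \nu+\lambda)$ with increment $(\nu,\lambda,\mu)$, yielding
$$g(\lambda+\mu+\nu,\, \mu+\nu+\lambda,\, \nu+\lambda+\mu) \, \ge \, g(\lambda+\mu,\, \mu+\nu,\, \nu+\lambda).$$
Since all three left-hand entries equal $\tau$, we conclude $g(\tau,\tau,\tau) \ge g(\lambda,\mu,\nu)$. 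Taking $\lambda,\mu,\nu$ to achieve $\rK(n)$ proves the first inequality in~\eqref{eq:Kron-sym}, and observing additionally that $\ell(\tau) \le k$ whenever $\ell(\lambda), \ell(\mu), \ell(\nu) \le k$ proves the second.

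I do not anticipate any significant obstacle here. The only mild subtlety is selecting the right increments in each iteration so that after two steps the three arguments become identical; the cyclic pattern $(\mu,\nu,\lambda)$ followed by $(\nu,\lambda,\mu)$ is the natural choice and is forced to work precisely because partition addition commutes. Everything else is bookkeeping on sizes and lengths.
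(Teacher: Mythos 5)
Your proof is correct and follows essentially the same approach as the paper: build the self-paired triple by componentwise addition of the three cyclic rotations of the extremal triple, then apply Manivel's monotonicity (with positivity of the cyclic rotations guaranteed by the symmetry property) to peel back to $g(\lambda,\mu,\nu)$. The paper compresses the two monotonicity applications into a single displayed chain of inequalities, but the underlying argument is identical to yours.
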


\begin{proof} Let \. $g(\al,\be,\ga)=\rK(n)$, for some \ts $\al,\be,\ga\vdash n$.
Let \. $\la:=(\al+\be+\ga)\vdash 3n$. By the symmetry property~\eqref{eq:sym-mono} and
monotonicity property (Theorem~\ref{t:manivel}),
we have:
$$
\aligned
\rKs(3n) \, & \ge \,
g(\la,\la,\la) \, = \, g\big(\al+\be+\ga, \. \be+\ga+\al, \. \ga+\al+\be\big)  \\
\, & \ge \, \max \big\{\ts g(\al,\be,\ga), \, g(\be+\ga,\. \ga+\al,\. \al+\be) \ts\big\} \\
\, & \ge \, \max \big\{\ts g(\al,\be,\ga), \, g(\be,\ga,\al), \, g(\ga,\al,\be)\ts\big\} \, = \, \rK(n).
\endaligned
$$
This proves the first inequality in~\eqref{eq:Kron-sym}.  The second inequality follows
verbatim the argument above and the fact that \ts $\ell(\la) = \max\{\ell(\al),\ell(\be),\ell(\ga)\}$.
\end{proof}

\smallskip

\begin{cor}
For all \ts $k\ge 1$, there is a constant \ts $C_k>0$, such that
\begin{equation}\label{eq:main-lower-As}
\rAs(n,k) \, \geq \, C_k \. n^{k^3\ts - \ts 3k^2 \ts - \ts 3k\ts + \ts 3} \qquad
\text{for all \ \ $n\ge 1$\ts.}
\end{equation}
\end{cor}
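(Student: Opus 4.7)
The plan is to chain Theorem~\ref{t:intro-rows-lower} into Lemma~\ref{l:Kron-sym} and then handle divisibility by a padding argument via monotonicity.

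First, write \ts $n = 3m + r$ \ts with \ts $r\in\{0,1,2\}$ \ts and \ts $m = \lfloor n/3\rfloor$. By Theorem~\ref{t:intro-rows-lower}, choose \ts $\al,\be,\ga\vdash m$ \ts with \ts $\ell(\al),\ell(\be),\ell(\ga)\le k$ \ts and \ts $g(\al,\be,\ga)\ge C_k\ts m^{k^3-3k^2-3k+3}$. Setting \ts $\la := \al+\be+\ga\vdash 3m$, the argument in the proof of Lemma~\ref{l:Kron-sym} (the second inequality there, which follows by combining symmetry~\eqref{eq:sym-mono} with the monotonicity property of Theorem~\ref{t:manivel}) yields \ts $\ell(\la)\le k$ \ts and \ts $g(\la,\la,\la)\ge g(\al,\be,\ga)$.

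Second, to cover the residue \ts $r$, let \ts $\de := (r)$ \ts be a one-row partition (interpreted as the empty partition when \ts $r=0$). Then \ts $g(\de,\de,\de) = 1 > 0$, so by Theorem~\ref{t:manivel} applied with \ts $\al=\be=\ga=\de$, the partition \ts $\wt\la := \la+\de \. \vdash \. n$ \ts satisfies \ts $\ell(\wt\la)=\ell(\la)\le k$ \ts and
$$
g(\wt\la,\wt\la,\wt\la) \, \ge \, g(\la,\la,\la) \, \ge \, g(\al,\be,\ga) \, \ge \, C_k\ts m^{k^3-3k^2-3k+3}\ts.
$$
Since \ts $m\ge (n-2)/3$, absorbing the factor of \ts $3^{k^3-3k^2-3k+3}$ \ts (and adjusting finitely many initial values of \ts $n$, e.g.\ using the trivial lower bound \ts $g(\la,\la,\la)\ge 1$ \ts for self-conjugate~$\la$ from~\cite{BB}) into a new constant \ts $C_k'>0$ \ts gives inequality~\eqref{eq:main-lower-As} uniformly for all \ts $n\ge 1$.

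There is no real obstacle: both ingredients are already in hand. The only mildly delicate point is the residue correction, which is handled cleanly because \ts $(r)$ \ts is a row partition (so summing it into \ts $\la$ \ts preserves the length bound \ts $\ell(\wt\la)\le k$) and because \ts $g\bigl((r),(r),(r)\bigr)=1$, making Theorem~\ref{t:manivel} directly applicable.
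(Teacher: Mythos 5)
Your proof is correct and follows essentially the same route as the paper: reduce to the non-symmetric case via Lemma~\ref{l:Kron-sym} (which gives $\rAs(3m,k)\ge \rA(m,k)$), invoke Theorem~\ref{t:intro-rows-lower}, and then handle the residue mod~$3$ by a monotonicity padding argument. The only cosmetic difference is that the paper derives $\rAs(n+1,k)\ge\rAs(n,k)$ from $g(\la+(1),\mu+(1),\nu+(1))\ge g(\la,\mu,\nu)$ and iterates, whereas you add the one-row partition $(r)$ in a single application of Theorem~\ref{t:manivel}; both are sound, and your observation that $g\bigl((r),(r),(r)\bigr)=1$ and that adding a row partition preserves $\ell(\cdot)\le k$ is exactly what makes the step go through.
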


\begin{proof}
Combining Theorem~\ref{t:intro-rows-lower} and Lemma~\ref{l:Kron-sym}, we obtain the
result for \ts $3\ts | \ts n$.  For general~$n$, note that \ts $g(\la+1,\mu+1,\nu+1)\ge g(\la,\mu,\nu)$,
again by the monotonicity property.  Thus, we have \. $\rAs(n+1,k) \ge \rAs(n,k)$.  This completes the proof.
\end{proof}

\smallskip

\begin{lemma}\label{l:Kron-fsym}
For all \ts $n, k \ge 1$, we have:
\begin{equation}\label{eq:Kron-fsym}
\rBfs(4n+k^2,k) \, \ge \, \rAs(n,k)\ts.
\end{equation}
\end{lemma}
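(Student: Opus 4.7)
The plan is to take any $\la\vdash n$ with $\ell(\la)\le k$ realizing $g(\la,\la,\la)=\rAs(n,k)$, and to exhibit a self-conjugate $\tilde\la\vdash 4n+k^2$ with $d(\tilde\la)\le k$ satisfying $g(\tilde\la,\tilde\la,\tilde\la)\ge g(\la,\la,\la)$. The natural candidate is the \emph{doubled Frobenius extension} of $\la$: let $\tilde\la$ be the unique self-conjugate partition with $k$ Frobenius arm/leg coordinates $a_i:=2\la_i+(k-i)$ for $i=1,\dots,k$. Since $\la_i\ge\la_{i+1}$ and $\la_k\ge 0$, the sequence $a_1>\cdots>a_k\ge 0$ is strictly decreasing and nonnegative, so $\tilde\la$ is well-defined with $d(\tilde\la)=k$; concretely, $\tilde\la_i=2\la_i+k$ for $i\le k$ and $\tilde\la_{k+2j-1}=\tilde\la_{k+2j}=\la'_j$ for $j\ge 1$. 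A direct count gives $|\tilde\la|=2\sum_{i=1}^k a_i+k=4n+k^2$, as required.

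To derive the key inequality $g(\tilde\la,\tilde\la,\tilde\la)\ge g(\la,\la,\la)$, I would iterate Manivel's monotonicity (Theorem~\ref{t:manivel}), paralleling the cyclic argument in Lemma~\ref{l:Kron-sym} and exploiting the conjugation identity $g(\la,\la,\la)=g(\la,\la',\la')$ from~\eqref{eq:sym-mono}. A first Manivel step applied to the triple $(\la,\la',\la')$, using the shift $(\la',\la,\la)$ whose Kronecker coefficient $g(\la',\la,\la)=g(\la,\la,\la)$ is positive, gives
\[
g(\la+\la',\,\la+\la',\,\la+\la')\ \ge\ g(\la,\la,\la).
\]
A second Manivel step then adds a self-conjugate partition $\mu$ of size $2n+k^2$---naturally the self-conjugate shape whose $k$ Frobenius arm/leg coordinates are $b_i=\la_i+(k-i)$, for which~\cite{BB} guarantees $g(\mu,\mu,\mu)\ge 1$---with the three shifts (chosen asymmetrically as permitted by~\eqref{eq:sym-mono}) calibrated so that the triple lands on $(\tilde\la,\tilde\la,\tilde\la)$.

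The main obstacle is the Frobenius-level bookkeeping required to ensure that the composite shift matches $\tilde\la$ componentwise in each slot: addition and conjugation of partitions do not commute, so the naive decomposition $\tilde\la=\la+\la'+\mu$ with $\mu$ self-conjugate generally fails at the level of parts (e.g., already $\tilde\la-(\la+\la')$ may not be a partition). One circumvents this by applying Manivel in the asymmetric form permitted by~\eqref{eq:sym-mono}, using the starting triples $(\la,\la',\la')$, $(\la',\la,\la')$, $(\la',\la',\la)$ interchangeably with possibly different shifts in each slot, or by a sorting argument carried out directly on the Frobenius coordinates $a_i=2\la_i+(k-i)$ of $\tilde\la$. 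Once this calibration is made explicit, the chain of Manivel applications closes and yields the desired inequality.
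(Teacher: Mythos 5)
You have correctly identified the target partition (your $\tilde\lambda$ is exactly the paper's $\mu=(\delta_k+2\lambda)\cup(2\lambda)'$) and the right tools (Manivel's monotonicity interleaved with conjugation). However, the concrete chain you propose does not close, and you acknowledge this but do not resolve it, so there is a genuine gap. Your plan is to make two big Manivel steps: first shift by $(\lambda',\lambda,\lambda)$ to reach $g(\lambda+\lambda',\lambda+\lambda',\lambda+\lambda')$, then shift by a self-conjugate $\mu$ with Frobenius coordinates $b_i=\lambda_i+(k-i)$ to land on $\tilde\lambda$. But $(\lambda+\lambda')+\mu\ne\tilde\lambda$: for $i\le k$ one gets $2\lambda_i+\lambda'_i+k$ instead of $2\lambda_i+k$, and as you note $\tilde\lambda-(\lambda+\lambda')$ need not even be a partition. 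The closing sentence ("one circumvents this ... once this calibration is made explicit") is a hope, not an argument; neither of the two escape routes you sketch (asymmetric slots, Frobenius sorting) is shown to actually produce a valid Manivel shift in each coordinate.

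The paper's proof avoids this by refusing to batch the additions. It alternates \emph{single} Manivel steps with conjugations: first add $(\delta_k,\delta_k,\delta_k)$ and conjugate two slots so that $\alpha',\beta'$ appear below the Durfee square as $\delta_k\cup\alpha'$, $\delta_k\cup\beta'$; then add the cyclic shift $(\beta,\gamma,\alpha)$, which lands in the rows above the Durfee square (so the result is again a partition of the form $(\delta_k+\cdot)\cup\cdot$); conjugate two slots again; add $(\gamma,\alpha,\beta)$; conjugate; add $(\beta,\alpha,\gamma)$. The six-line "long formula" accumulates $2\alpha$ in the rows and $2\alpha'$ in the columns slot by slot, and at each stage the addend sits entirely in the first $k$ rows of a partition with Durfee square exactly $k$, so the Manivel hypothesis (that the addend is itself a partition added componentwise) is satisfied. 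Specializing $\alpha=\beta=\gamma=\lambda$ at the end gives $(\delta_k+2\lambda)\cup(2\lambda)'$ in all three slots. The essential idea you are missing is that the conjugations must occur \emph{between} the Manivel steps, so that each addition of one copy of $\lambda$ goes either to the rows or (after conjugation) to the columns, never both at once; your $\lambda+\lambda'$ shift tries to do both simultaneously and is exactly what makes the bookkeeping fail.
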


\begin{proof}
Let \. $g(\al,\be,\ga)\ge 1$, for some \ts $\al,\be,\ga\vdash n$ \ts
such that \ts $\ell(\al), \ts \ell(\be), \ts \ell(\ga) \ts\le k$.
Recall from the introduction that \. $g\big(\de_k,\de_k,\de_k\big)\ge 1$,
where \ts $\de_k=(k^k)$ \ts is the square shape.
By the repeated alternating application of the monotonicity property
(Theorem~\ref{t:manivel}) and symmetry/conjugation properties~\eqref{eq:sym-mono},
we have the following \defn{long formula}:
$$\aligned
g\big(\al,\be,\ga\big) \ & \le \ g\big(\de_k+\al,\de_k+\be,\de_k+\ga\big) \ = \ g\big(\de_k\cup\al',\de_k\cup\be',\de_k+\ga\big) \\
& \le \ g\big((\de_k+\be) \cup\al',(\de_k+\ga)\cup\be',\de_k+\ga+\al\big) \\
& \hskip1.3cm \ = \ g\big((\de_k+\be) \cup\al',(\de_k+\be)\cup\ga',\de_k\cup (\ga+\al)'\big)\\
& \le \ g\big((\de_k+\be+\ga) \cup\al',(\de_k+\be+\al)\cup\ga',(\de_k+\be)\cup (\ga+\al)'\big) \\
& \hskip1.3cm \ = \ g\big((\de_k+\al) \cup(\be+\ga)',(\de_k+\ga)\cup(\be+\al)',(\de_k+\be)\cup (\ga+\al)'\big) \\
& \le \ g\big((\de_k+\al+\be) \cup(\be+\ga)',(\de_k+\ga+\al)\cup(\be+\al)',(\de_k+\be+\ga)\cup (\ga+\al)'\big).
\endaligned
$$
See Figure~\ref{f:long-formula} below for an illustration.

Now, let \. $\al=\be=\ga$ \. and suppose \. $g(\al,\al,\al)=\rAs(n,k)$.  From above,
$$
g\big(\al,\al,\al\big) \ \le \ g\big((\de_k+2\al) \cup(2\al)', (\de_k+2\al) \cup(2\al)', (\de_k+2\al) \cup(2\al)'\big) \ = \
g(\mu,\mu,\mu),
$$
where \. $\mu := \ts \big(\de_k + 2\al\big) \cup (2\al)' \ts \vdash \ts 4n+k^2$.
Since \ts $\mu=\mu'$ \ts and \ts $d(\mu)\le k$, we conclude:
$$
\rBfs(4n+k^2,k) \,\ge \, g(\mu,\mu,\mu) \, \ge \, g(\al,\al,\al) \, = \, \rAs(n,k),
$$
as desired.
\end{proof}

\begin{figure}[hbt]
 \begin{center}
  \includegraphics[height=8.cm]{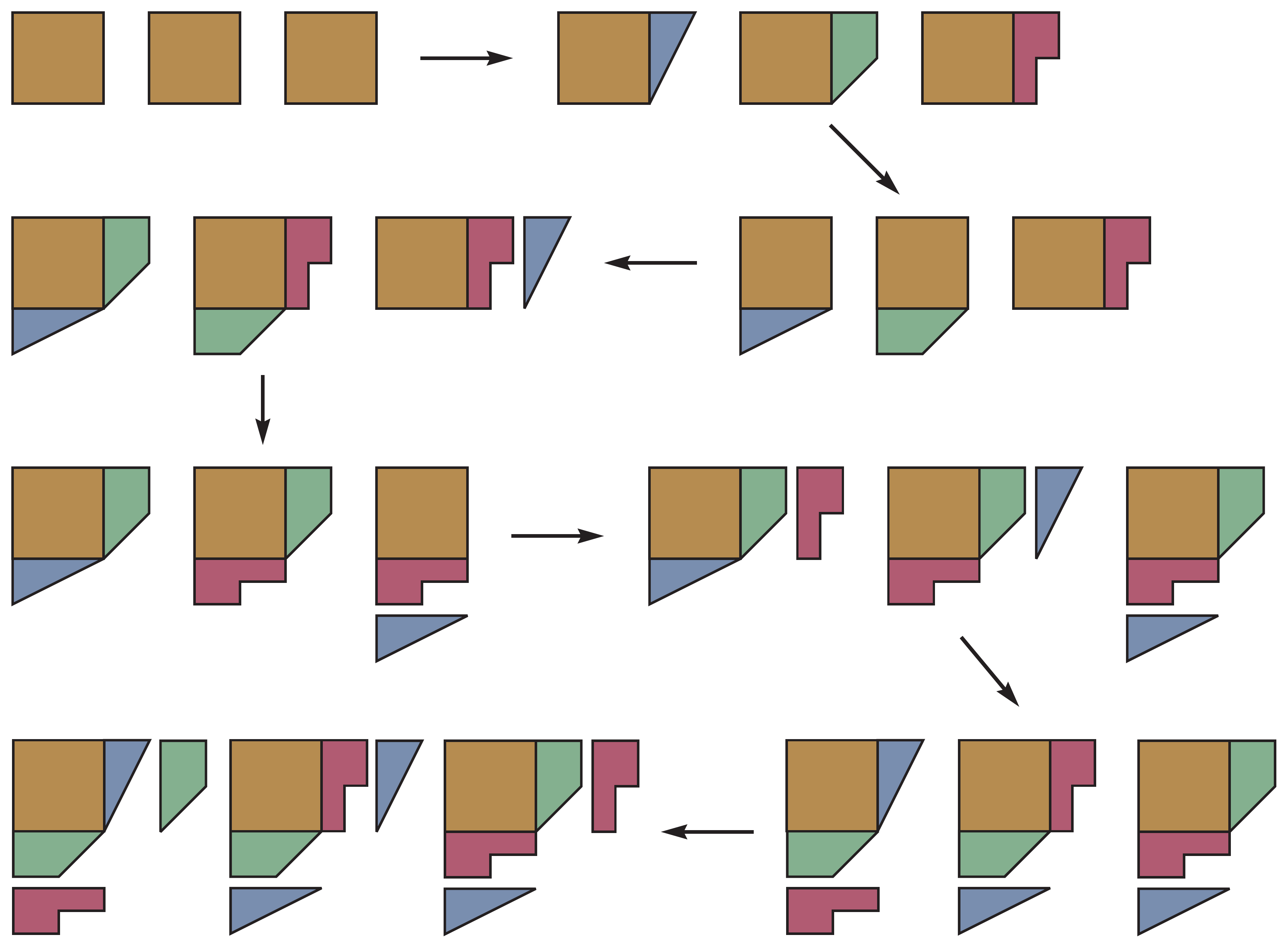}
   \label{f:long-formula}
 \end{center}
 \caption{Graphic rendering of the long formula in the proof of Lemma~\ref{l:Kron-fsym}. }
\end{figure}

\smallskip

\begin{cor}\label{c:Bfs}
For all \ts $k\ge 1$, there is a constant \ts $D_k>0$, such that
\begin{equation}\label{eq:main-lower-Bfs}
\rBfs(n,k) \, \geq \, D_k \. n^{k^3 \ts - \ts 18 k^2 \ts + \ts 102 k \ts - \ts 182} \qquad
\text{for all \ \ $n\ge 1$\ts.}
\end{equation}
\end{cor}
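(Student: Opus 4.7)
The strategy is to compose Lemma~\ref{l:Kron-fsym}, which gives $\rBfs(4m + k^2, k) \geq \rAs(m, k)$, with the lower bound \eqref{eq:main-lower-As}, namely $\rAs(m, k) \geq C_k \. m^{k^3 - 3k^2 - 3k + 3}$. Combining these immediately yields $\rBfs(N, k) \geq C_k \. m^{k^3 - 3k^2 - 3k + 3}$ for every $N$ of the form $N = 4m + k^2$, and substituting $m = (N - k^2)/4$ converts this into a polynomial lower bound in $N$ with the same exponent, after absorbing the arithmetic constants into a new $k$-dependent prefactor.

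To extend the bound from $N$ of the special form $4m+k^2$ to all $N \geq 1$, I would invoke monotonicity. Given a self-conjugate $\la$ with $d(\la) \leq k$ realizing $\rBfs(n_0, k)$, the plan is to construct a self-conjugate extension $\mu \supseteq \la$ of any desired nearby size with $d(\mu)\le k$, by padding $\la$ with boxes on the main diagonal and with pairs of boxes symmetric across it, below the Durfee square. Theorem~\ref{t:manivel} then propagates the Kronecker inequality $g(\mu,\mu,\mu) \geq g(\la,\la,\la)$. For small $N$ the stated inequality becomes trivial upon shrinking~$D_k$.

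The main obstacle is that this padding step cannot freely exploit the full Durfee-square budget: for a self-conjugate $\la$ whose Durfee square already has size~$k$, there may be no room to add the few boxes needed to reach arbitrary $N$ while keeping the shape self-conjugate and preserving $d(\mu) \leq k$. Resolving this requires a constant amount of slack in the Durfee parameter. This is reflected precisely in the form of the stated exponent: since
\[
k^3 - 18k^2 + 102k - 182 \ = \ (k-5)^3 - 3(k-5)^2 - 3(k-5) + 3,
\]
the cleanest realization of the plan runs the entire chain (Lemma~\ref{l:Kron-fsym} combined with \eqref{eq:main-lower-As}) at parameter $k-5$ instead of $k$, and then uses the trivial monotonicity $\rBfs(n,k) \geq \rBfs(n,k-5)$. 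The five units of slack in the Durfee square absorb the padding needed to handle all residues of $N$ and to guarantee that the extending self-conjugate boxes fit without violating the Durfee constraint.
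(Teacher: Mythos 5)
Your overall plan — chain $\rBfs$ through $\rAs$ and $\rA$, then account for the congruence obstruction by sacrificing five units of Durfee budget — matches the paper's final answer, and you correctly reverse-engineered the identity $k^3-18k^2+102k-182 = (k-5)^3 - 3(k-5)^2 - 3(k-5) + 3$. But the step that is supposed to get you from sizes of the form $4m+k^2$ (or $12n+k^2$) to arbitrary $n$ is not actually established, and as sketched it does not work.

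The gap is in the padding argument. You propose to take a self-conjugate $\la$ with $d(\la)\le k-5$ realizing the bound and produce a self-conjugate $\mu\supseteq\la$ of the desired size with $d(\mu)\le k$, then invoke Theorem~\ref{t:manivel} for $g(\mu,\mu,\mu)\ge g(\la,\la,\la)$. Manivel's monotonicity, however, is not a containment statement: it requires $\mu=\la+\sigma$ with $\sigma$ a partition and $g(\sigma,\sigma,\sigma)>0$. For self-conjugate $\la$ the sum $\la+\sigma$ is rarely self-conjugate (one has $(\la+\sigma)'=\la'\cup\sigma'$, not $\la'+\sigma'$), and dressing it up as $(\la+\sigma)\cup\sigma'$ does not fix this in general since $(\la\cup\sigma')+\sigma\neq(\la+\sigma)\cup\sigma'$ unless the parts of $\la$ dominate those of $\sigma'$ — a separation that holds for the rectangle $\delta_k$ inside the construction of Lemma~\ref{l:Kron-fsym} but not for an arbitrary extremal $\la$. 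So "pad afterward" is not a move you get for free; it is exactly where the self-conjugacy constraint bites.

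The paper instead does the size-adjustment \emph{inside} the long formula of Lemma~\ref{l:Kron-fsym}: it replaces $\delta_k$ by $\delta_{k+c}$ with a small symmetric partition of size $t\in\{0,1,3,\dots,11,14\}$ chopped from its inner corner. The chopped square is still self-conjugate with $g>0$, so the whole chain goes through and yields self-conjugate $\mu$ of size $12n+(k+c)^2-t$ with $d(\mu)\le k+c$; varying $t$ and $n$ hits every residue class mod $12$, and $c=5$ suffices. Your instinct that five extra Durfee units are what is being spent is right, but the mechanism is modification of the base shape in the construction, not post-hoc padding of the extremal partition, and without some version of that construction the extension to all $n$ is unproved.
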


\begin{proof}
Combining Lemma~\ref{l:Kron-sym} and Lemma~\ref{l:Kron-fsym}, we have
\begin{equation}\label{eq:Kron-fsym-cor}
\rBfs(12n+k^2,k) \, \ge \, \rA(n,k)\ts.
\end{equation}
Now Theorem~\ref{t:intro-rows-lower} implies the result for \ts $12\ts | \ts (n-k^2)$.

For general~$n$, note that in the proof of Lemma~\ref{l:Kron-fsym}, we can use \.
$$\mu \. := \. \big(\de_{k+c} + 2\al\big) \cup (2\al)' \ts \vdash \ts 4n+(k+c)^2 \quad
\text{for all \ $c\ge 1$ \ and \ $\ell(\al) \leq k$.}
$$
We can also replace \ts $\delta_{k+c}$ \ts with a chopped symmetric square  by removing a symmetric partition of size \. $t\in \{0,1,3,4,\ldots,10,11,14\}$ from its bottom right corner. Since \ts $g(\la,\la,\la)\geq 1$ \ts for all \ts $\la=\la'$, see~\cite{BB,PPV}, we can then repeat the steps in the proof of Lemma~\ref{l:Kron-fsym} with the chopped square instead of the \ts $\delta_k$. This allows us to construct symmetric partitions $\mu$ of any size modulo~$12$, and note that \ts $c=5$ \ts suffices.

We conclude that \. $g(\mu,\mu,\mu) \geq A(n,k) $ \. for some \.
$\mu \vdash 12n+(k+c)^2-t$ \. with \. $k \leq d(\mu) \leq k+c$.
This implies that \. $\rBfs(n,k) \ts\geq \ts A\big(\lfloor (n-k^2)/12 \rfloor, k-5\big)$, and the bound follows.
\end{proof}

\smallskip

\begin{rem}\label{r:Bfs}{\rm
A curious Conjecture~5.12 in~\cite{BRR}, claims that
$$g\big((\la+1)\cup 1, \ts (\mu+1)\cup 1, \ts (\nu+1)\cup 1\big) \, \ge \, g(\la,\mu,\nu) \quad
\text{for all} \ \. \la,\mu,\nu \vdash n.
$$
This would imply that \ts $\rBfs(n+2,k)\ge \rBfs(n,k)$ \ts and improve
the lower order terms in \eqref{eq:main-lower-Bfs}. }
\end{rem}

\smallskip

\begin{proof}[Proof of Theorem~\ref{t:Kron-fs-asy}]
Fix \ts $\ve>0$ \ts and let \. $k=(2+\ve)\sqrt{n}$.  Let us show that
for sufficiently large \ts $n> N(\ve)$, we have \ts $\rA(n,k) = \rK(n)$.
We follow~\cite{PPY} in our presentation.

Recall that a sequence \ts $\{\la\}$ \ts of partitions is called \defn{Plancherel}
if \. $f^\la = \sqrt{n!} \. e^{-O(\sqrt{n})}$.  Suppose that \ts $g(\la,\mu,\nu)=\rK(n)$.
By Stanley's theorem \eqref{eq:Kron-max} and the dimension bound, we have:
$$\sqrt{n!} \. e^{-O(\sqrt{n})} \. = \. \rK(n) \. = \. g(\la,\mu,\nu) \. \le \. f^\la \. \le \. \sqrt{n!}$$
We conclude that all three sequences
\ts $\{\la\}$, \ts $\{\mu\}$ \ts and \ts $\{\nu\}$ \ts achieving the maximum \. $g(\la,\mu,\nu)$ \.
are
Plancherel.  In fact, we can even fix two of these three sequences
(see \cite[Thm~1.4]{PPY}).

Now, by the VKLS Theorem (see \cite[Thm~1.3]{PPY}), all three sequences must
have \defng{VKLS shape}.  Without stating it explicitly, it follows from the
definition that \.
$$
\ell(\la), \. \ell(\mu), \. \ell(\nu) \, \le \,  2\sqrt{n} \. +  \. O(n^{1/6})
\. \le \. (2+\ve)\sqrt{n} \. = \. k\ts,
$$
for $n$ large enough.  Thus we have \ts $\rA(n,k) = \rK(n)$ \ts in that case.
By~\eqref{eq:Kron-fsym-cor}, we conclude:
%\begin{equation}\label{eq:Kron-fsym-proof}
$$
\rKfs\big((16+3\ve)n\big) \, \ge \, \rKfs(12n+k^2) \, \ge \, \rBfs\big(12n+k^2,k\big) \, \ge \, \rA(n,k) \, = \, \rK(n) \, = \, \sqrt{n!} \. e^{-O(\sqrt{n})}\ts,
$$
%\end{equation}
for \ts $k=(2+\ve)\sqrt{n}$ \ts as above and \ts $n$ \ts large enough.
Taking logs on both sides implies the result.
\end{proof}

\medskip

\subsection{Proof of Theorem~\ref{t:explicit}}  \label{ss:mono-explicit}

We now use the iterated conjugation trick in the proof of Lemma~\ref{l:Kron-fsym}
to give the first nontrivial lower bound for \ts $g\big(\de_k,\de_k,\de_k\big)$.

We start with \cite[Thm~1.2]{PP2}, which gives for $k=2s^2$ that
$$
g\big((2s)^{2s},(2s)^{2s},k^2\big) \, \ge \, C \. 2^{2s} \. (2s)^{-9/2}>0
$$
for some universal constant \. $C > 0.004$.

Observe that by conjugating two partitions we get
$$
g\big((2s)^{2s},(2s)^{2s},2^k\big) \. = \. g\big((2s)^{2s},(2s)^{2s},k^2\big) \. > \. 0.
$$
Let \ts $s:=2^r$. We can repeatedly apply the combination of monotonicity and conjugation,
from \ts $4m=k=2s^2$,  to get
\begin{equation*}%\label{eq:explicit-4square}
\aligned
& g\big((4m)^{4m},\ts (4m)^{4m}, \ts (4m)^{4m}\big) \ \ge \
g\big((2m)^{4m},\ts (2m)^{4m}, \ts (2m)^{4m}\big) \ =
\\
 & \hskip.8cm = \
g\big((4m)^{2m},\ts (4m)^{2m}, \ts (2m)^{4m}\big) \ \ge \
g\big((2m)^{2m},\ts(2m)^{2m}, \ts (m)^{4m}\big) \ \ge \
\\
& \hskip1.6cm \ge \ \ldots \ \geq \ g\big((2s)^{2s},(2s)^{2s},k^2\big) \ > \ C \. 2^{2s} \. (2s)^{-9/2}.
\endaligned
\end{equation*}
Since \. $s= \sqrt{2k} = \sqrt{2} \. n^{1/4}$, we obtain the first part of Theorem~\ref{t:explicit}.

For the caret shape \ts $\tau_k$, note that
$$
\tau_k \, = \, \big(\de_{k+1} + 2\ts \rho_{k}\big) \. \cup \. 2\ts \rho_{k}\..
$$
In notation of the proof of  Lemma~\ref{l:Kron-fsym}, let \. $\al:= \rho_k$ \.
and recall from \eqref{eq:rho-bounds} that \ts $g(\al,\al,\al)>0$.  From the long formula in the proof, we have:
$$\aligned
g\big(\tau_k, \tau_k, \tau_k\big) \ & = \
g\big((\de_{k+1} +2\al) \cup 2 \al,(\de_{k+1} +2\al) \cup 2 \al,(\de_{k+1} +2\al) \cup 2 \al\big) \\
& \ge \ \max \big\{\ts g(\al,\al,\al), \. g(\de_{k+1},\de_{k+1},\de_{k+1}) \ts\big\}.
\endaligned
$$
Now the second part of Theorem~\ref{t:explicit} follows from the first part.  \qed

\bigskip

\section{Final remarks and open problems}\label{s:finrem}

\smallskip
\subsection{} \label{ss:finrem-durfee}
The importance of Durfee square size in connection with the \defng{vanishing
of Kronecker coefficients} (i.e., whether they are nonzero),
has long been understood in the literature.  We refer to
\cite[$\S$3]{BB} and~\cite{Dvir,Reg} for some notable examples.

\subsection{} \label{ss:finrem-special}
There are many special cases
of partitions with small Durfee square size (at most three),
where the Kronecker coefficients are computed exactly, see
e.g.\ \cite{BWZ,BOR,RW,Tew}.  In all these cases the
Kronecker coefficients are bounded by a constant.  This is in
sharp contrast with examples in~\cite{BV,MRS} and
our lower bound in Theorem~\ref{t:intro-rows-lower},
suggesting that being bounded is a small numbers phenomenon.

\subsection{} \label{ss:finrem-stab}
Recall Murnaghan's \defng{stability property}: \ts
the sequence \. $(a_0,a_1,a_2,\ldots)$ \. defined as
$$
a_d \. = \. a_d(\la,\mu,\nu) \, := \, g\big(\la+(d),\ts \mu+(d),\ts \nu+(d)\big)
$$
is increasing and bounded.  This phenomenon was recently
generalized by Stembridge
$$
a_d \. = \. a_d(\la,\mu,\nu; \. \al,\be,\ga) \, := \, g\big(\la\ts+ \ts d\ts\al, \ts \mu\ts+ \ts d\ts \be,\nu\ts+ \ts d \ts\ga\big)
\quad \text{for \ \, $g(\al,\be,\ga)\ts = \ts 1$. }
$$
The nondecreasing of \ts $\{a_d\}$ \ts
follows from the monotonicity property (Theorem~\ref{t:manivel}),
while boundedness was proved by Sam and Snowden~\cite{SS}.

It is known that \. $a_d$ \. are a quasi-polynomial in~$d$ \cite{Man},
see also \cite{BV,Man-2}.
In view of Corollary~\ref{c:rows} and Theorem~\ref{t:intro-rows-lower},
it would be interesting to give a
combinatorial description of the degree of these quasi-polynomials.
Let us mention that \ts
$a_d\ge d+1$ \ts for all \ts $g(\al,\be,\ga)>1$ \ts \cite[Prop.~3.2]{Ste}.

Similarly, one can consider more general families of Kronecker coefficients
$$
b_d \. = \. b_d(\la,\mu,\nu; \. \al,\be,\ga; \. \zeta,\xi,\eta) \, := \, g\big((\la\ts+ \ts d\ts\al) \cup (d\ts \zeta)',
\ts (\mu\ts+ \ts d\ts \be) \cup (d \ts \xi)', \ts(\nu \ts+ \ts d \ts\ga) \cup (d \ts \eta)'\big),
$$
where \. $g(\al,\be,\ga), \ts g(\zeta,\xi,\eta)\ge 1$.
In view of Theorem~\ref{t:main}, we conjecture that \. $b_d$ \. are again
quasi-polynomial in~$d$, for $d$ large enough.  In a special case when
\. $\al=\be =\ga=\zeta=\xi=\eta=1$, this is the \defng{hook stability} \ts
introduced in~\cite{BRR}.
Note that even characterizing
the cases when these quasi-polynomials are nonzero is quite challenging
%  e.g.\ $g\big(\la\cup 1^d, \mu \cup 1^d, \nu \cup 1^d\big)= 0$ \. for \ts $d$ \ts large enough,
% for all \. $\la,\mu,\nu$
(cf.\ \cite[$\S$6.2]{BRR}).

A more general approach to stability for Lie groups can be found in~\cite{Par}.

\smallskip
\subsection{} \label{ss:finrem-barv}
One can give explicit lower bounds on the constants \ts $C_k$ \ts
in~\eqref{eq:main-lower-A}. For that, in notation of the proof
of Theorem~\ref{t:intro-rows-lower}, we need to use the
integral volume \ts $G_k$ \ts of the
\defng{three-way transportation polytopes} \ts $T_k(1)$.
These polytopes are highly symmetric, so the lower bounds
are especially simple and can be found in~\cite{Ben},
see also~\cite{Bar} for a survey.  See also a recent explicit
lower bound in~\cite[Ex.~2.6]{BR}, on the (usual) volume of \ts $T_k(1)$.
To put these bounds into context, recall the natural upper bound
used in~\cite{PP_ct} in this case.  The main result in~\cite{Ben}
(and in greater generality in~\cite{Bar}), is that
these upper bounds are asymptotically sharp.

\smallskip
\subsection{} \label{ss:finrem-radom}
For the uniform random partitions \ts $\la\vdash n$, we have \.
$\ell(\la) = \Theta(\sqrt{n}\.\log n)$ \. and \. $d(\la) = \Theta(\sqrt{n})$,
see e.g.\ \cite{DP} and references therein.  This implies that the
bounds in \eqref{eq:cor-rows} and~\eqref{eq:main-upper} are useful
only for partitions with relatively few rows and small Durfee square size,
respectfully.

\smallskip

\subsection{} \label{ss:finrem-const}
Define
$$
\rB(n,k) \, := \, \max \. \big\{ \ts g(\la,\mu,\nu) \ : \ \la,\mu,\nu\vdash n \ \
\text{and} \ \ d(\la),d(\mu),d(\nu)\le k \ts \big\}.
$$
Comparing the bounds in Theorem~\ref{t:intro-rows-lower} and Theorem~\ref{t:main},
it would be natural to believe that the upper bound on \ts $\rB(n,k)$ \ts in
\eqref{eq:main-upper} is closer to the truth than the lower bound in~\eqref{eq:main-lower-A}.

\begin{conj}\label{conj:const-4}
There is a universal constant \ts $c>0$ \ts such that
$$
\rB(n,k) \, \ge \, n^{4k^3 \ts - \ts ck^2} \quad\text{for all} \ \ n, \ts k \ge\ts 1\ts.
$$
\end{conj}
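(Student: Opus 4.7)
The plan is to adapt the contingency-array argument from the proof of Theorem~\ref{t:intro-rows-lower} so that the natural generating function is supported on partitions with Durfee square at most~$k$ rather than length at most~$k$. Concretely, set $n = 2ak$ and consider the product
\begin{equation*}
F_a(x,y) \, := \, h_{a^k}[xy] \cdot e_{a^k}[xy].
\end{equation*}
Since $h_{a^k} \cdot e_{a^k} = \sum_\la D_\la \, s_\la$ is supported on partitions $\la \vdash 2ak$ obtained by alternately gluing $k$ horizontal and $k$ vertical strips, each such $\la$ has $d(\la) \le k$. Applying $[xy]$ and expanding via \eqref{eq:schur_kron}, the same bound transfers to the partitions $\mu, \nu$ indexing the Schur basis in $x$ and $y$, since in the triple Cauchy identity the Durfee square is preserved.

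Extracting the coefficient of a carefully chosen monomial on both sides of
\begin{equation*}
F_a(x,y) \, = \, \sum_{\la,\mu,\nu:\, d \le k} \. D_\la \. g(\la,\mu,\nu) \. s_\mu(x) \. s_\nu(y)
\end{equation*}
would yield an identity equating a polytope count on the LHS with a weighted sum of Kronecker coefficients on the RHS. The LHS arises as a convolution of contributions from $h_{a^k}[xy]$ (the standard three-way contingency polytope of dimension $k^3 - 3k$) and $e_{a^k}[xy]$ (a polytope of three-dimensional binary arrays with prescribed marginals, also of dimension roughly $k^3$). With a suitable choice of monomial, for instance $\prod_i x_i^{2a} \prod_j y_j^{2a}$ with indices ranging over a doubled set of size $2k$, I expect the LHS to grow as $a^{4k^3 - O(k^2)}$ by Ehrhart theory~\cite{BR}, the factor $4$ arising because both $\mu$ and $\nu$ now carry an arm and a leg, each contributing a three-dimensional volume. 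On the RHS, the number of triples of Durfee-$\le k$ partitions is $O(n^{6k})$ by \eqref{eq:partitions_subset_bound}, while $D_\la$ and the Kostka numbers $K_{\mu,\cdot}, K_{\nu,\cdot}$ appearing after the $s_\mu, s_\nu$ expansion are each bounded by $n^{O(k^2)}$ via Lemma~\ref{l:kostka_bound}. Pigeon-holing then delivers the claimed bound $\rB(n,k) \ge n^{4k^3 - ck^2}$ for a universal constant~$c$.

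The hard part will be establishing that the LHS actually grows as $a^{4k^3 - O(k^2)}$ rather than as $a^{2k^3}$. The $e$-part of the product does not correspond to a classical transportation polytope, and its Ehrhart asymptotics require separate treatment; moreover, the convolution that arises from extracting a single monomial may fail to achieve the full factor of $4$. A likely remedy is to replace the naive product $h_{a^k}[xy] \cdot e_{a^k}[xy]$ by a construction with independent variable sets, such as $h_{a^k}\bigl[x^{(1)} y^{(1)}\bigr] \cdot e_{a^k}\bigl[x^{(2)} y^{(2)}\bigr]$, and to combine the resulting partition triples into Durfee-$\le k$ triples via the monotonicity property (Theorem~\ref{t:manivel}), much as is done in Section~\ref{s:mono}. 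One should also verify that the Littlewood--Richardson corrections introduced when combining the $h$ and $e$ factors, estimated via Lemma~\ref{l:lr_bounds_k_durfee}, do not erode the leading exponent, and that the partitions produced are not confined to a proper subclass of Durfee-$\le k$ partitions for which a stronger upper bound than Theorem~\ref{t:main} is available.
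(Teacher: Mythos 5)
Conjecture~\ref{conj:const-4} is left open in the paper: no proof is given, so there is nothing to compare against, and your text is a research plan rather than a verifiable argument. Your structural observation that $h_{a^k}\ts e_{a^k}$ is Schur-supported on partitions with $d(\la)\le k$ is correct (equivalently, $\la_{k+1}\le k$). However, the plan contains a flaw more serious than the caveats you flag.

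The obstruction is degree-theoretic: the elementary factor cannot supply polynomial growth in $a$. With $m$ values of the $x$-variables, $e_a[xy]$ is square-free in the $m^2$ products $x_iy_j$, so the exponent of any single $x_i$ in $e_{a^k}[xy]$ is at most $mk$, and the total $x$-degree $ak$ forces $a\le m^2$ for the polynomial to be nonzero at all. Extracting a monomial such as $\prod_i x_i^{2a}$ from $e_{a^k}[xy]$ in $m=k$ (or $m=2k$) variables therefore gives zero once $a>k^2$ (resp.\ $a>4k^2$), i.e.\ once $n$ exceeds $2k^3$. Passing to independent variable sets avoids vanishing, but then the $e$-part counts $0/1$ arrays in a box of bounded side, at most $2^{O(k^3)}$ --- constant in~$n$. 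Either way the left-hand side of your identity is $O_k(a^{k^3})$, the same order as for $h_{a^k}[xy]$ alone, and pigeon-holing only reproduces $\rB(n,k)\ge C_k\, n^{k^3-O(k^2)}$, which already follows from $\rB(n,k)\ge\rA(n,k)$ and Theorem~\ref{t:intro-rows-lower}. Two smaller points: the assertion that the triple Cauchy identity ``preserves the Durfee square'' is false in general (the correct nonvanishing constraint, due to Bessenrodt--Behns~\cite{BB}, is $d(\nu)\le d(\la)\, d(\mu)$); and if the Durfee bound is instead enforced by restricting to $2k$ variables on each side, the triples produced have $d\le 2k$, yielding a lower bound on $\rB(n,2k)$ rather than on $\rB(n,k)$. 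You rightly worry whether the left-hand side reaches $a^{4k^3}$; the deeper issue is that it cannot even reach $a^{2k^3}$ by this route, and matching the $4k^3$ exponent arising from the recursive doubling in the proof of Theorem~\ref{t:main} appears to require a construction that multiplies, rather than adds, the degrees of freedom in the arm and leg directions.
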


\smallskip

\subsection{} \label{ss:finrem-rho}
We believe that the Kronecker coefficients in Theorem~\ref{t:explicit} grow much
faster than our lower bounds suggest.
The following conjecture  immediately implies Conjecture~\ref{conj:rKfs}
improving upon Theorem~\ref{t:Kron-fs-asy}.

\begin{conj}  \label{conj:rho}
We have:
$$\aligned
g(\rho_k,\rho_k,\rho_k) \, & = \, \sqrt{n!} \, e^{-O(n)} \quad \text{where}
\quad n=\tbinom{k}{2}, \qquad\text{and} \\
g(\de_\ell,\ts\de_\ell,\ts\de_\ell) \, & = \, \sqrt{n!} \, e^{-O(n)} \quad \text{where}
\quad n=\ell^2.
\endaligned
$$
\end{conj}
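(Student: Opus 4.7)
The upper bound in both asymptotics is immediate: by the dimension bound of $\S$\ref{ss:Basic-dim}, $g(\la,\la,\la) \le f^\la$, and a direct hook length computation gives $f^{\rho_k}, f^{\de_\ell} = \sqrt{n!}\.e^{-O(n)}$, since both shapes are self-conjugate and ``balanced'' in the sense of having no long hooks relative to $n$.  The real content of the conjecture is therefore the matching lower bound, which lies well beyond the monotonicity toolbox of Section~\ref{s:mono} (which at best yielded $e^{\Omega(n^{1/4})}$ in Theorem~\ref{t:explicit}). I propose to attack it via the character-sum identity
\begin{equation*}
g(\la,\la,\la) \. = \. \frac{1}{n!}\sum_{\si\in S_n}\.\chi^\la(\si)^3.
\end{equation*}
The identity element contributes exactly $(f^\la)^3/n! = \sqrt{n!}\.e^{-O(n)}$, which already matches the target.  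It thus suffices to show that the contribution of non-identity permutations cannot cancel this main term by more than a multiplicative factor $e^{O(n)}$.

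To control the tail, split $S_n\setminus\{e\}$ by the support size $s(\si) := n - \mathrm{fix}(\si)$ and invoke a character-ratio bound of Roichman / Larsen--Shalev type of the form $|\chi^\la(\si)/f^\la| \le q(\la,\si)$ with exponential decay in $s(\si)$, valid for VKLS-typical $\la$ (which includes both $\rho_k$ and $\de_\ell$).  Bounding the number of permutations of support size $j$ by $\binom{n}{j}\ts j! \le n^j$, the tail is at most
\begin{equation*}
\frac{(f^\la)^3}{n!}\.\sum_{j=2}^n n^j \. q(\la,j)^3,
\end{equation*}
which must be shown to be $e^{O(n)}$ times the main term.  The plan is to split into two regimes: for \emph{small support} $s(\si) \le \ve n$, use the Murnaghan--Nakayama rule directly, exploiting the rigid border-strip structure of $\rho_k$ and $\de_\ell$ to obtain sharp bounds on $\chi^\la(\si)$ at short cycle types; for \emph{large support} $s(\si) \ge \ve n$, use Larsen--Shalev estimates of the form $|\chi^\la(\si)/f^\la| \le e^{-c\.s(\si)}$ which, combined with the crude count $|\{\si:\. s(\si)\ge \ve n\}| \le n!$ and the tolerance $e^{O(n)}$ permitted by the conjecture, absorbs the $\log n!$ entropy.

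The principal obstacle is verifying the character-ratio bound \emph{uniformly over all conjugacy classes} for the specific self-conjugate shapes $\rho_k$ and $\de_\ell$, with constants good enough to beat $\log n!$; the Larsen--Shalev bounds apply with the correct exponential decay rate for VKLS-shaped $\la$, but the constants have not, to our knowledge, been worked out quantitatively in this explicit form.  A parallel but weaker route proceeds via Parseval:
\begin{equation*}
\sum_{\la\vdash n}\.g(\la,\rho_k,\rho_k)^2 \, = \, \frac{1}{n!}\sum_{\si\in S_n}\chi^{\rho_k}(\si)^4 \, \ge \, \frac{(f^{\rho_k})^4}{n!} \, = \, n!\.e^{-O(n)},
\end{equation*}
so pigeonhole over the $p(n) = e^{O(\sqrt n)}$ partitions yields $\max_\la\. g(\la,\rho_k,\rho_k) \ge \sqrt{n!}\.e^{-O(n)}$.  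Pinning down this maximum at $\la = \rho_k$ itself — rather than some other VKLS-shaped $\la$ — is morally equivalent to the character-sum estimate above, and constitutes the conceptual heart of the conjecture; any progress would likely require a quantitative refinement of the PPY extremality theorem for the Kronecker maximum with two arguments fixed.
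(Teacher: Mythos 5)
There is no ``paper's own proof'' to compare against here: the statement you are addressing is Conjecture~\ref{conj:rho}, which the paper explicitly leaves open (``we are nowhere close to proving this claim,'' \S\ref{ss:finrem-rho}). The only bounds the paper actually establishes are the trivial \ts $g(\rho_k,\rho_k,\rho_k)\ge 1$ \ts in \eqref{eq:rho-bounds} and \ts $g(\de_k,\de_k,\de_k)\ge e^{\Omega(n^{1/4})}$ \ts in Theorem~\ref{t:explicit}, both far below \ts $\sqrt{n!}\ts e^{-O(n)}$. Your proposal is therefore rightly framed as an attack plan rather than a proof, and as a plan it has a genuine quantitative flaw in addition to the admitted gaps. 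In the large-support regime you propose to use a character-ratio bound of the form \ts $|\chi^\la(\si)/f^\la|\le e^{-c\ts s(\si)}$ \ts together with the crude count \ts $n!$ \ts and the tolerance \ts $e^{O(n)}$; this cannot work, since the resulting tail is of order \ts $(f^\la)^3 e^{-3c\ve n}$, which exceeds the main term \ts $(f^\la)^3/n!$ \ts by a factor \ts $e^{\Theta(n\log n)}$. To beat the entropy \ts $\log n!=\Theta(n\log n)$ \ts you need ratio bounds decaying like \ts $n^{-c\ts s(\si)}$ \ts (so that the cube beats the \ts $n^{s}$ \ts count, i.e.\ $3c>1$), uniformly over all cycle types, and no such bound with verified constants is currently available for \ts $\rho_k$ \ts or \ts $\de_\ell$; moreover the terms \ts $\chi^\la(\si)^3$ \ts are signed, so even good ratio bounds only control absolute values, and the possibility of cancellation against the identity term is exactly the unresolved heart of the problem.

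Your Parseval fallback is correct as stated: since the fourth powers are nonnegative, \ts $\sum_\la g(\la,\rho_k,\rho_k)^2\ge (f^{\rho_k})^4/n!$, and pigeonhole over \ts $p(n)=e^{O(\sqrt n)}$ \ts partitions gives \ts $\max_\la g(\la,\rho_k,\rho_k)\ge\sqrt{n!}\ts e^{-O(n)}$. But this proves a different statement: it locates the large coefficient at some unidentified \ts $\la$, not at \ts $\la=\rho_k$ \ts (nor even at a self-conjugate \ts $\la$; compare \S\ref{ss:finrem-BBS}, where for self-conjugate third argument only \ts $e^{\Omega(\sqrt n)}$ \ts is known). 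So the proposal correctly identifies where the difficulty lies, but neither of its two routes closes the conjecture, and the large-support step would fail even as a conditional reduction unless the assumed character bounds are strengthened from exponential to superexponential decay in the support.
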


\smallskip

\subsection{} \label{ss:finrem-BBS}
Let \ts $n=\binom{k}{2}$. Define
$$
\rF(n) \, := \,  \max \. \big\{ \ts g(\rho_k\ts,\ts\rho_k\ts,\la) \ : \ \la\vdash n \ \
\text{and} \ \ \la=\la' \ts \big\}.
$$
It would be interesting to find sharp lower bounds on \ts $\rF(n)$.

It was shown in \cite[$\S4.2$]{PP2}, that \ts $g(\rho_k\ts,\ts\rho_k\ts,\la) = e^{\Om(\sqrt{n})}$ \ts
for two-row partitions \ts $\la=(n/2,n/2)$, where~$n$ is even.  For self-conjugate \ts $\la$,
it was only shown recently in~\cite[$\S$5]{BBS} using \defng{modular representation theory},
that the \ts $\rF(n)$ \ts is unbounded.

Combined with the lower bound for Littlewood--Richardson coefficients given in \cite[Thm~1.5]{PPY},
Theorem~5.11 in~\cite{BBS} implies
that \. $\rF(n) = e^{\Om(\sqrt{n})}$.  This is nowhere close to Conjecture~\ref{conj:rho}, but
gives us a hope that there might be more tools to be discovered.

\smallskip

\subsection{} \label{ss:finrems-MPP}

In~\cite{MPP}, there is a tight  asymptotic bound \,
$g\big(\delta_{2s},\delta_{2s},(n-k,k)\big) \ts = \ts \Theta\big(2^{\sqrt{2k}}/k^{3/2} \big)$ \. in the case
when \. $k/n \in (0,1/2)$. However, this bound cannot be applied when \ts $k=n/2+ o(n)$,
so the bound  from~\cite{PP2} is still the best known lower bound in this case.

\vskip.6cm

\subsection*{Acknowledgements}
We are grateful to Sasha Barvinok,
Chris Bowman, Christian Ikenmeyer, Rosa Orellana,
Fedya Petrov and Mike Zabrocki for helpful discussions and remarks
on the subject. We also thank Stefan Trandafir for reminding us
of the polynomial growth problem for fixed Durfee size. We thank the anonymous referee for many helpful remarks leading to improved exposition.  Both authors were partially supported by the~NSF.

\vskip.7cm

{\em \,
We dedicate this paper to the memory of Christine Bessenrodt who passed away
earlier

this year.  May her memory be a blessing and may her life
be an inspiration for all of us.}

\vskip1.6cm

%\newpage

\end{document}